\newtheorem{lemma}{Lemma}[section]
\newtheorem{thm}[lemma]{Theorem}
\newtheorem{prop}[lemma]{Proposition}
\newtheorem{cor}[lemma]{Corollary}
\newtheorem{oss}[lemma]{Observation}
\newtheorem{example}[lemma]{Example}
\newtheorem{defn}[lemma]{Definition}
\newcommand{\cl}{C \kern -0.1em \ell}
\newcommand{\finedimo}{{\hfill\hbox{$\square$}\vspace{2pt}}}
\newcommand\matR{{\mathbb{R}}}
\newcommand\matN{{\mathbb{N}}}
\renewcommand{\hbar}{{\overline{h}}}
\newfont{\Got}{eufm10 scaled 1200}
\newcommand\calD{{\mathcal D}}
\begin{document}

\title{Linear and multilinear algebra on diffeological vector spaces}

\author{Ekaterina~{\textsc Pervova}}

\maketitle

\begin{abstract}
\noindent This paper deals with some basic constructions of linear and multilinear algebra on finite-dimensional diffeological vector spaces. After verifying
that some standard constructions carry over to the diffeological setting, we consider the diffeological dual formally checking that the assignment to each
space of its dual defines a covariant functor from the category of finite-dimensional diffeological vector spaces to the category of standard (that is, carrying
the usual smooth structure) vector spaces. We verify that the diffeological tensor product enjoys the typical properties of the usual tensor product
(associativity, distributivity, commutativity with taking), after which we focus on the so-called smooth direct sum decompositions, a phenomenon exclusive to
the diffeological setting. We then consider the so-called pseudo-metrics (diffeological analogues of scalar products). After recalling that a pseudo-metric
determines the natural pairing map onto the diffeological dual, which admits a well-defined left inverse for any fixed choice of a basis, we discuss
decompositions of vector spaces into a smooth direct sum of the maximal isotropic subspace and a characteristic subspace; we show that, although such a
decomposition can be described on the basis of a pseudo-metric, it actually depends only on the choice of the coordinate system. Furthermore, we show that,
contrary to what was erroneously claimed (by me) elsewhere, a characteristic subspace is not unique and is not invariant under the diffeomorphisms of the space
on itself. The maximal isotropic subspace is on the other hand an invariant of the space itself, and this fact allows to assign to each its well-defined
characteristic quotient, obtaining another functor, this time a contravariant one, to the category of standard vector spaces. After discussing the diffeological
analogues of isometries, we end with some remarks concerning diffeological algebras and diffeological Clifford algebras. Perhaps a larger than usual part of
the paper recalls statements that already appear elsewhere, but when this is the case, we try to accompany them with new proofs and examples.

\noindent\textbf{Keywords:} diffeology, diffeological vector space, diffeological dual, pseudo-metric, characteristic quotient, diffeological algebra, Clifford
algebra

\noindent MSC (2010): 53C15, 15A69 (primary), 57R35, 57R45 (secondary).
\end{abstract}

\section*{Introduction}

Already some basic multilinear (and even linear) algebra when considered for \emph{diffeological vector spaces} gives rise to some phenomena that have no
counterparts in the standard context. One curious observation that appears already at the most elementary level is that even for finite-dimensional
diffeological vector spaces not all (multi)linear maps between them are smooth. Thus, it may, or may not, be obvious whether the classical isomorphisms of
multilinear algebra continue to exist in the diffeological context, in the sense whether their restrictions to the smooth subspaces are well-defined and,
if so, whether these restrictions are diffeomorphisms in their turn. We start with considering these kinds of questions, in addition to providing a few explicit
proofs to the statements announced or implicit in\cite{wu}.

The paper might be a bit boring up to Section 1.5 (which is not to claim that it is not boring from that point on), since most often than not, it just says
that stuff works just like you are used to it does (except for ``linear'' not meaning automatically ``smooth''); the only statement that may not be immediately
obvious is the fact that (in the finite-dimensional case) the diffeological dual is always a \emph{standard} space (the statement itself appears already in
\cite{me2018}; we give here a simple and straightforward proof).

I had originally intended to write this paper to collect in one place a number of statements of linear/multilinear algebra scattered across my other papers
(where they appeared as needed for the problem at hand), accompanying them with more examples and perhaps filling in some gaps that would have been natural to
fill in; and this is more or less how Sections 1.5 and 1.6 were written. They deal with so-called smooth direct sum decompositions of vector spaces, which in
general are only a subset of all decompositions; roughly speaking, for a smooth direct sum, not only the space itself decomposes, and also its diffeological
(smooth) structure does in some sense.

Then, in Section 2 which deals with pseudo-metrics (the diffeological version of scalar product) new phenomena emerged (that I had previously missed). The most
important of them, which, although it appears in the context of pseudo-metrics, can be defined independently, is a semi-canonical decomposition of each
finite-dimensional diffeological vector space into a smooth direct sum of its \emph{characteristic} subspace and its \emph{maximal isotropic} subspace, which
are in some sense the largest standard and the largest ``wholly nonstandard'' parts. This decomposition depends on the choice of the basis, and --- contrary to
what was affirmed in \cite{me2018} --- in particular the characteristic subspace is not uniquely defined by the space only, nor is it true that there can be
only maximal subspace that is standard and splits off smoothly (implying that certain statements that depended on that erroneous statement, such as some in
\cite{me2017b}, need some extra assumptions), as Examples \ref{characteristic-not-unique-ex}, \ref{more-than-one-splitting-ex}, and
\ref{characteristic-not-invt-under-diffeo-ex} show. On the hand, the subspace that \emph{is} uniquely defined is the maximal isotropic subspace (which is indeed
the maximal isotropic subspace of any pseudo-metric on the given space and can also be defined as the intersection of kernels of all smooth linear functions on
the given space), and this in particular allows to assign to each space the corresponding \emph{characteristic quotient} (such being standard spaces) to which
all smooth linear maps descend.

The last section deals with diffeological algebras, up to Clifford algebras associated to pseudo-metrics. This has less of a new material; still, we observe
that a Clifford algebra of form $\cl(V,g)$ is essentially determined by the corresponding Clifford algebra associated to its characteristic subspace (assuming
that we make a fixed choice of one), which, since it is naturally contained in $\cl(V,g)$ as a vector subspace, splits off smoothly in it, and provide more
illustrative examples.

Recent results particularly relevant for infinite-dimensional diffeological vector spaces appear in \cite{christensen-wu2017}. The main (and most easily 
accessible) reference for the subject of diffeology in general, and diffeological vector spaces in particular, is the excellent and comprehensive source 
\cite{iglesiasBook}. Our work also builds to some extent on the definitions and facts already presented in \cite{wu}, and previously in \cite{vincent} (see also 
\cite{iglesiasVector}). Such basic notions as those of the diffeological dual and the tensor product,were announced, in a definitive manner, in \cite{wu}, where 
the discussion is rather concise; part of our intention is to render explicit what is implicit there, and to provide specific motivations stemming from various 
examples. I've also cited some of my own works (\cite{me2017b} --- \cite{me2019}), sometimes to render the present text more comprehensive (and to better 
illustrate them), sometimes to remediate to some errors that creeped in into them.

\paragraph{Acknowledgments} I would like to thank Dan Christensen and Enxin Wu for their comments on the first section of this paper.

\section{Diffeological spaces and diffeological vector spaces}

We spell out the diffeological counterparts of some standard multilinear algebra notions and facts.

\subsection{Main definitions}

Here we recall as briefly as possible the main notions regarding diffeology (\cite{So1}, \cite{So2}) and diffeological (vector) spaces; more details can be
found in \cite{iglesiasBook}; see \cite{iglesiasVector}, \cite{vincent}, \cite{wu} for diffeological vector spaces.

\paragraph{Diffeological spaces} A \textbf{diffeological space} (see \cite{So2}) is a pair $(X,\calD_X)$ where $X$ is a set and $\calD_X$ is a specified
collection of maps $U\to X$ (called \textbf{plots}) for each open set $U$ in $\matR^n$ and for each $n\in\matN$, such that for all open subsets
$U\subseteq\matR^n$ and $V\subseteq\matR^m$ the following three conditions are satisfied:
\begin{enumerate}
\item (The covering condition) Every constant map $U\to X$ is a plot;
\item (The smooth compatibility condition) If $U\to X$ is a plot and $V\to U$ is a smooth map (in the usual sense) then the composition $V\to U\to X$ is also
a plot;
\item (The sheaf condition) If $U=\cup_iU_i$ is an open cover and $U\to X$ is a set map such that each restriction $U_i\to X$ is a plot then the entire map
$U\to X$ is a plot as well.
\end{enumerate}
Usually, one just writes $X$ to denote a diffeological space; a standard example of a diffeological space is a smooth manifold $M^n$, with the diffeology given
by all smooth maps of form $U\to M^n$, for $U$ a domain in some $\matR^k$. If we have two diffeological spaces, $X$ and $Y$, and a set map $f:X\to Y$ between
them, this map is said to be \textbf{smooth} if for every plot $p:U\to X$ of $X$ the composition $f\circ p$ is a plot of $Y$.

\paragraph{Comparing diffeologies} Given a set $X$, the set of all possible diffeologies on $X$ is partially ordered by inclusion: a diffeology $\calD$ on $X$
is said to be \textbf{finer} than another diffeology $\calD'$ if $\calD\subset\calD'$ (whereas $\calD'$ is said to be \textbf{coarser} than $\calD$).

\paragraph{Generated diffeology and quotient diffeology} These are two (out of many) ways to construct a diffeology. If $X$ is a set and we are given a set of
maps $A=\{U_i\to X\}_{i\in I}$, the \textbf{diffeology generated by $A$} is the smallest, with respect to inclusion, diffeology on $X$ that contains $A$; its
plots are either locally constant or locally factor through those of $A$.

If now $X$ is a diffeological space, let $\sim$ be an equivalence relation on $X$, and let $\pi:X\to Y:=X/\sim$ be the quotient map. The \textbf{quotient
diffeology} (\cite{iglesiasBook}) on $Y$ is the diffeology in which $p:U\to Y$ is the diffeology in which $p:U\to Y$ is a plot if and only if each point in $U$
has a neighbourhood $V\subset U$ and a plot $\tilde{p}:V\to X$ such that $p|_{V}=\pi\circ\tilde{p}$.

\paragraph{Subset diffeology} Let $X$ be a diffeological space, and let $Y\subseteq X$ be its subset. The \textbf{subset diffeology} on $Y$ is the coarsest
diffeology on $Y$ making the inclusion map $Y\hookrightarrow X$ smooth.

\paragraph{Pushforwards and pullbacks of a diffeology} For any diffeological space $X$, any set $X'$, and any map $f:X\to X'$ there exists a finest diffeology
on $X'$ that makes the map $f$ smooth; it is called the \textbf{pushforward of the diffeology of $X$ by the map $f$}. If now we have a map $f:X'\to X$ then the
\textbf{pullback} of the diffeology of $X$ by the map $f$ is the coarsest diffeology on $X'$ such that $f$ is smooth.

\paragraph{The diffeological direct product} Let $\{X_i\}_{i\in I}$ be a collection of diffeological spaces. The \textbf{product diffeology} on the direct
product $X=\prod_{i\in I}X_i$ is the coarsest diffeology such that for each index $i\in I$ the natural projection $\pi_i:\prod_{i\in I}X_i\to X_i$ is smooth.

\paragraph{Functional diffeology} Let $X$, $Y$ be two diffeological spaces, and let $C^{\infty}(X,Y)$ be the set of smooth maps from $X$ to $Y$. Let \textsc{ev}
be the \emph{evaluation map}, defined by
$$\mbox{\textsc{ev}}:C^{\infty}(X,Y)\times X\to Y\mbox{ and }\mbox{\textsc{ev}}(f,x)=f(x).$$ The \textbf{functional diffeology} on $C^{\infty}(X,Y)$ is the
coarsest diffeology such that the evaluation map is smooth.

\paragraph{Diffeological vector spaces} Let $V$ be a vector space over $\matR$. The \textbf{vector space diffeology} on $V$ is any diffeology of $V$ such that
the addition and the scalar multiplication are smooth, that is,
$$[(u,v)\mapsto u+v]\in C^{\infty}(V\times V,V)\mbox{ and }[(\lambda,v)\mapsto\lambda v]\in C^{\infty}(\matR\times V,V),$$ where $V\times V$ and $\matR\times V$
are equipped with the product diffeology; equipped with a vector space diffeology, $V$ is called a \textbf{diffeological vector space}.

\paragraph{Smooth linear maps, subspaces and quotients} Given two diffeological vector spaces $V$ and $W$, the space of \textbf{smooth linear maps} between them
is denoted by $L^{\infty}(V,W)=L(V,W)\cap C^{\infty}(V,W)$. This is an $\matR$-linear subspace of $L(V,W)$, frequently a proper subspace. A \textbf{subspace} of
a diffeological vector space $V$ is any vector subspace of $V$ endowed with the subset diffeology (and is a diffeological vector space on its own). Finally, if
$V$ is a diffeological vector space and $W\leqslant V$ is a subspace of it then the quotient $V/W$ is a diffeological vector space with respect to the quotient
diffeology.

\paragraph{Direct sum of diffeological vector spaces} Let $V_1,\ldots,V_n$ be a collection of diffeological vector spaces. The usual direct sum
$V_1\oplus\ldots\oplus V_n$ of these spaces, equipped with the \emph{product} diffeology, is a diffeological vector space.

\paragraph{Fine diffeology on vector spaces} The \textbf{fine diffeology} on a vector space $V$ is the \emph{finest} vector space diffeology on it, and a vector
space carrying one is called a \emph{fine vector space}. Note that \emph{any} linear map between two fine vector spaces is smooth. In the finite-dimensional case,
the fine spaces are precisely the spaces (diffeomorphic to) $\matR^n$ with their standard diffeology (one consisting of all usually smooth maps into them).

\paragraph{Diffeological dual} For a diffeological vector space $V$, its \textbf{diffeological dual} $V^*$ (\cite{vincent}, \cite{wu}) is defined as the set of
all smooth linear maps $V\to\matR$ into the standard $\matR$, endowed with the functional diffeology.

\paragraph{The tensor product} Given a finite collection $V_1,\ldots,V_n$ of diffeological vector spaces, their usual tensor product
$V_1\otimes\ldots\otimes V_n$ is endowed with the \textbf{tensor product diffeology} (see \cite{vincent}, \cite{wu}) defined as the quotient diffeology of the
finest vector space diffeology on the free product $V_1\star\ldots\star V_n$ that contains the product diffeology on their direct product. The diffeological
tensor product possesses the usual universal property by Theorem 2.3.5 of \cite{vincent}, namely, if $W$ is another diffeological vector space then the space of
all smooth linear maps $V_1\otimes\ldots\otimes V_n\to W$, considered with the functional diffeology, is diffeomorphic to the space of all smooth multilinear
maps $V_1\times\ldots\times V_n\to W$ (also considered with the functional diffeology).

\subsection{Smooth linear and bilinear maps}

In this section we consider some of the classic isomorphisms of multilinear algebra, showing that they do extend into the context of diffeology, while providing
examples that show that the \emph{a priori} difference is significant.

\subsubsection{Linear maps and smooth linear maps}

The sometimes significant difference just mentioned is illustrated by the following example; note that the existence of such examples has already been mentioned
in \cite{wu}, see Example 3.11.

\begin{example}\label{linear-nonsmooth-example}
This is an example of $V$ such that $L^{\infty}(V,\matR)<L(V,\matR)$. Let $V=\matR^n$ equipped with the coarse diffeology; we claim that the only smooth linear
map $V\to\matR$ is the zero map. Indeed, let $f:V\to\matR$ be a linear map; it is smooth if and only if, for any plot $p$ of $V$, the composition $f\circ p$ is
a plot of $\matR$, \emph{i.e.}, $f\circ p$ is a usual smooth map $U\to\matR$ for some domain $U\subseteq\matR^k$. However, by definition of the coarse
diffeology, $p$ is allowed to be \emph{any} set map $U\to V$, so it may not even be continuous.

To provide a specific instance, choose some basis $\{v_1,\ldots,v_n\}$ of $V=\matR^n$, and a basis $\{v\}$ of $\matR$.\footnote{Obviously, the respective
canonical bases would do the job just fine.} With respect to these, $f$ is given by $n$ real numbers, more precisely, by the matrix $(a_1\,\,\,\ldots\,\,\,a_n)$.
Consider the following $n$ plots $p_i$ of $V$ with $i=1,\ldots,n$, defined by setting $p_i:\matR\to V$ and $p_i(x)=|x|v_i$; then $(f\circ p_i)(x)=a_i|x|v$. The
only way for this latter map to be smooth is to have $a_i=0$, and this must hold for $i=1,\ldots,n$, whence our claim.
\end{example}

The example just given shows that the \emph{a priori} issue of there being diffeological vector spaces $V$, $W$ such that $L^{\infty}(V,W)<L(V,W)$ does indeed
occur, and rather easily. True, this requires some rather surprising vector spaces/diffeologies for this happen; but diffeology was designed for dealing with
surprising, or at least unusual from the Differential Geometry point of view, objects,\footnote{This is a story beautifully told in the Preface and Afterword to
the excellent book \cite{iglesiasBook}.} and furthermore, the very essence of what diffeology aims to add to the `standard' differential setting is the
flexibility of what can be called smooth. In particular, the fact that \emph{any} given map $\matR^k\supseteq U\to X$ can be a plot for some diffeology on a
given set $X$ (for instance, for the diffeology generated by this map) easily gives rise to some surprising spaces.

\begin{example}
Once again, consider $V=\matR^n$ and some basis $\{v_1,\ldots,v_n\}$ of $V$; endow it with the vector space diffeology generated by all smooth maps plus the map
$p_i$ already mentioned, that is, the map $p_i:\matR\to V$ acting by $p_i(x)=|x|v_i$. Let $v$ be a generator of $\matR$ (\emph{i.e.}, any non-zero vector). Using
the same reasoning as in Example \ref{linear-nonsmooth-example}, one can show that if $f:V\to\matR$ is linear and, with respect to the bases chosen has matrix
$(a_1\,\,\,\ldots\,\,\,a_n)$, then for it to be smooth we must have $a_i=0$; hence the (usual vector space) dimension $L^{\infty}(V,\matR)$ (\emph{i.e.}, that of
the diffeological dual of $V$) is at most $n-1$.\footnote{One can actually show that $f$ is smooth \emph{if and only if} $a_i=0$, and so
$\dim(L^{\infty}(V,\matR))$ is precisely $n-1$; we do not elaborate on this, since we mostly interested in showing that it canbe strictly smaller (by any
admissible value, as we see below).}

This reasoning can be further extended by choosing some natural number $1<k<n$ and a set of $k$ indices $1\leqslant i_1<i_2<\ldots<i_k\leqslant n$, and endowing
$V$ with the vector space diffeology generated by all smooth maps plus the set $\{p_{i_1},\ldots,p_{i_k}\}$. Arguing as above, we can easily conclude that
$\dim(L^{\infty}(V,\matR))$ is at most $n-k$.
\end{example}

The examples just cited show, in particular, that the diffeological dual (defined in \cite{vincent} and \cite{wu}) of a diffeological vector space can be much
different from the usual vector space dual. Given the importance of the isomorphism-by-duality in multilinear algebra's arguments, the implications of this
difference deserve to be considered.

\subsubsection{Bilinear maps and smooth bilinear maps}

In this section we consider the same issues as above in the case of bilinear maps: given two diffeological vector spaces, what is the difference between the set
of all bilinear maps on one of them with values in the other, and the set of all such bilinear maps that in addition are smooth?

\paragraph{Smooth bilinear maps} Let $V$, $W$ be two diffeological spaces. As usual, a $W$-valued bilinear map is a map $V\times V\to W$ linear in each argument;
it is considered to be, or not, smooth with respect to the product diffeology on $V\times V$ and the diffeology on $W$. We first illustrate that what happens for
linear maps does (expectedly) happen for bilinear maps, \emph{i.e.}, the set of smooth bilinear maps can be strictly smaller than that of bilinear maps.

Our notation is as follows. Given $V$, $W$ two diffeological vector spaces, let $B(V,W)$ be the set of bilinear maps on $V$ with values in $W$, and let
$B^{\infty}(V,W)$ be the set of those bilinear maps that are smooth with respect to the product diffeology on $V\times V$ and the given diffeology on $W$.

\begin{example}
The examples seen in the previous section provide readily the instances of $V$ and $W$ such that $B^{\infty}(V,W)$ is a proper subspace of $B(V,W)$. Indeed, let
us take $V=\matR^n$ equipped with the coarse diffeology, and let $W=\matR$ considered with the standard diffeology. It is easy to extend the reasoning of Example
\ref{linear-nonsmooth-example} to show that for these two spaces $B^{\infty}(V,W)=0$.

Once again, take a basis $\{v_1,\ldots,v_n\}$ of $V$ and a basis $\{w\}$ of $W$; let $f\in B^{\infty}(V,W)$. Then with respect to the bases chosen $f$ is defined
by the matrix $(a_{ij})_{n\times n}$, where $f(v_i,v_j)=a_{ij}w$. For each $i=1,\ldots,n$ consider the already-seen map $p_i:\matR\to V$ given by $p_i(x)=|x|v_i$;
this map is a plot of $V$ by definition of the coarse diffeology (that includes all set maps from domains of various $\matR^k$ to $V$). Now call $p_{ij}$ the
product map $p_{ij}:\matR\to V\times V$, \emph{i.e.} the map given by $p_{ij}(x)=(p_i(x),p_j(x))$; it is obviously a plot for the product diffeology on
$V\times V$. Putting everything together, we get that $(f\circ p_{ij})(x)=a_{ij}|x|w$; since this must be a plot of $\matR$, \emph{i.e.} smooth in the usual
sense, we get that $a_{ij}=0$. The indices $i,j$ being arbitrary, we conclude that the only way for $f$ to be smooth is for it be the zero map, whence the
conclusion.
\end{example}

The example just given stresses the importance of making a distinction between bilinear maps and smooth bilinear maps, showing that the two spaces can be
(\emph{a priori}) quite different, and motivates the next paragraph.

\paragraph{The function spaces $B^{\infty}(V,W)$ and $L^{\infty}(V,L^{\infty}(V,W))$} As is well-known, in the usual setting each bilinear map can be viewed as a
linear map $V\to L(V,W)$. In the diffeological context, since \emph{a priori} we might have $L^{\infty}(V,W)<L(V,W)$, we need to consider the question of whether
any \emph{smooth} bilinear map can be seen as a \emph{smooth} map $V\to L^{\infty}(V,W)$, where the latter is endowed with the functional diffeology. By the
properties of the diffeologies involved, the answer is in the affirmative, as we now show (a slightly more general statement appears also in the proof of
Theorem 3.8 of \cite{wu}).

\begin{lemma}
Let $V$, $W$ be two diffeological vector spaces, let $f:V\times V\to W$ be a bilinear map smooth with respect to the product diffeology on $V\times V$ and the
given diffeology on $W$, and let $G:V\to L^{\infty}(V,W)$ be a linear map that is smooth with respect to the given diffeology on $V$ and the functional
diffeology on $L^{\infty}(V,W)$. Then:
\begin{itemize}
\item for every $v\in V$ the linear map $F(v):V\to W$ given by $F(v)(v')=f(v,v')$ is smooth;
\item the bilinear map $g:V\times V\to W$ given by $g(v,v')=G(v)(v')$ is smooth.
\end{itemize}
\end{lemma}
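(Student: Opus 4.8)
The plan is to verify each bullet directly from the definitions of the diffeologies involved, using the characterization of plots for the product diffeology and for the functional diffeology. For the first bullet, I would fix $v\in V$ and show that $F(v)\colon V\to W$ is smooth; since $F(v)$ is manifestly linear, only smoothness needs checking. Given an arbitrary plot $q\colon U\to V$ of $V$, I would form the map $U\to V\times V$ sending $u\mapsto (c_v(u), q(u))$, where $c_v$ is the constant plot at $v$ (a plot by the covering condition); this is a plot of the product diffeology because both components are plots and the product diffeology is the coarsest making the projections smooth. Composing with $f$, which is smooth by hypothesis, gives that $u\mapsto f(v,q(u)) = F(v)(q(u))$ is a plot of $W$. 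Hence $F(v)\circ q$ is a plot for every plot $q$, so $F(v)$ is smooth, i.e.\ $F(v)\in L^\infty(V,W)$.

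For the second bullet, I would start from $G\colon V\to L^\infty(V,W)$ smooth and linear, and check that $g(v,v')=G(v)(v')$ is smooth (bilinearity is immediate from linearity of $G$ and of each $G(v)$). Let $r\colon U\to V\times V$ be a plot of the product diffeology; write $r=(r_1,r_2)$ with $r_1,r_2$ plots of $V$. Then $G\circ r_1\colon U\to L^\infty(V,W)$ is a plot of the functional diffeology since $G$ is smooth. The key point is that the functional diffeology on $L^\infty(V,W)$ is, by definition, the coarsest one making the evaluation map $\mathrm{ev}\colon L^\infty(V,W)\times V\to W$ smooth; therefore the map $U\to L^\infty(V,W)\times V$ given by $u\mapsto (G(r_1(u)), r_2(u))$ is a plot of the product diffeology on $L^\infty(V,W)\times V$ (both components are plots), and composing with $\mathrm{ev}$ yields that $u\mapsto \mathrm{ev}(G(r_1(u)),r_2(u)) = G(r_1(u))(r_2(u)) = g(r_1(u),r_2(u))$ is a plot of $W$. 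Since $r$ was an arbitrary plot of $V\times V$, this shows $g\in B^\infty(V,W)$.

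The only genuinely delicate point is making sure that the functional diffeology is being used correctly in both directions: in the first bullet one does not need it at all, while in the second bullet one needs precisely the fact that evaluation is smooth for the functional diffeology, together with the (coarsest) characterization of the product diffeology to assemble the pair $(G\circ r_1, r_2)$ into a single plot. I expect this to be the main thing to get right, though it is a routine consequence of the definitions collected in Section~1; everything else reduces to the covering condition and the universal property of the product diffeology. No smooth compatibility or sheaf condition is needed beyond what is already packaged into ``plot of the product diffeology.''
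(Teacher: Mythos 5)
Your proof is correct and follows essentially the same route as the paper's: the first bullet is handled identically (pairing the constant plot at $v$ with an arbitrary plot of $V$ and composing with $f$), and the second bullet is the paper's factorization $g=\mbox{\textsc{ev}}\circ(G\times\mbox{Id}_V)$, merely unpacked plot-by-plot. No gaps.
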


\begin{proof}
Let us prove the first statement. Fix a $v\in V$; to show that $F(v)$ is smooth, we need to show that for every plot $p:U\to V$ the composition $F(v)\circ p$ is
a plot of $W$. Fixing an arbitrary plot $p:U\to V$ of $V$, we define $\tilde{p}:U\to V\times V$ by setting $\tilde{p}(x)=(v,p(x))$ for all $x\in V$; this is
indeed a plot for the product diffeology, since the projections on both factors are smooth: $\pi_1\circ\tilde{p}$ is a constant map in $V$, while
$\pi_2\circ\tilde{p}=p$, which is a plot by assumption. We thus obtain $F(v)\circ p=f\circ\tilde{p}$; the latter map is a plot of $W$ since $f$ is smooth; and
since $p$ is any, so is $F(v)$.

To prove the second statement, it suffices to observe that $g$ writes as the composition $g=\mbox{\textsc{ev}}\circ(G\times\mbox{Id}_V)$; the map $\mbox{Id}_V$
being obviously smooth, $G$ being smooth by assumption, their product being smooth by definition of the product diffeology, and, finally, the evaluation map
$\mbox{\textsc{ev}}$ being smooth by the definition of the functional diffeology, we get the conclusion.
\end{proof}

What the above lemma gives us are the following two maps:
\begin{itemize}
\item the map $\tilde{F}:B^{\infty}(V,W)\to L(V,L^{\infty}(V,W))$ that assigns to each $f\in B^{\infty}(V,W)$ the map $F$ of the lemma (\emph{i.e.}, the
specified map that to each $v\in V$ assigns the smooth linear map $F(v):V\to W$). Observe that $F$ now writes as $F=\tilde{F}(f)$ and that the following relation
holds: $f=\mbox{\textsc{ev}}\circ(F \times\mbox{Id}_V)$;
\item the map $\tilde{G}:L^{\infty}(V,L^{\infty}(V,W))\to B^{\infty}(V,W)$ that assigns to each $G\in L^{\infty}(V,L^{\infty}(V,W))$ the map
$g=\mbox{\textsc{ev}}\circ(G\times\mbox{Id}_V)$. This latter map now writes as $g=\tilde{G}(G)$.
\end{itemize}

Before going further, we cite the following statement, which we will use immediately afterwards:

\begin{prop}\label{criterio-funct-diff} \emph{(\cite{iglesiasBook}, 1.57)}
Let $X$, $Y$ be two diffeological spaces, and let $U$ be a domain of some $\matR^n$. A map $p:U\to C^{\infty}(X,Y)$ is a plot for the functional diffeology of
$C^{\infty}(X,Y)$ if and only if the induced map $U\times X\to Y$ acting by $(u,x)\mapsto p(u)(x)$ is smooth.
\end{prop}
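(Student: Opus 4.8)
The plan is to make the functional diffeology on $C^{\infty}(X,Y)$ completely explicit and then read the statement off. For a set map $p\colon U\to C^{\infty}(X,Y)$ write $\mathrm{Ad}(p)\colon U\times X\to Y$ for the induced map $(u,x)\mapsto p(u)(x)$, and let $\calF$ be the collection of all such $p$ --- over all domains $U\subseteq\matR^n$, $n\in\matN$ --- for which $\mathrm{Ad}(p)$ is smooth with respect to the product diffeology on $U\times X$. The first step is to check that $\calF$ is a diffeology on $C^{\infty}(X,Y)$. The covering axiom holds since, for a constant map $p$ of value $f$, one has $\mathrm{Ad}(p)=f\circ\pi_X$ with $\pi_X\colon U\times X\to X$ the projection, a composite of smooth maps; smooth compatibility holds since $\mathrm{Ad}(p\circ g)=\mathrm{Ad}(p)\circ(g\times\mathrm{Id}_X)$ for any smooth $g$; and the sheaf axiom holds because, given an open cover $U=\bigcup_iU_i$ with each $p|_{U_i}\in\calF$ and any plot $q\colon W\to U\times X$, the sets $q^{-1}(U_i\times X)$ form an open cover of $W$ on each piece of which $\mathrm{Ad}(p)\circ q$ coincides with the composite of $\mathrm{Ad}(p|_{U_i})$ with the corresponding restriction of $q$, a plot of $Y$, so the sheaf axiom for the diffeology of $Y$ forces $\mathrm{Ad}(p)\circ q$ to be a plot.

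The second step is to verify that the evaluation map $\mathrm{ev}$ is smooth once $C^{\infty}(X,Y)$ is equipped with $\calF$. Given a plot $r\colon W\to C^{\infty}(X,Y)\times X$ of the product diffeology, its components $r_1:=\pi_1\circ r$ and $r_2:=\pi_2\circ r$ are, respectively, an $\calF$-plot and a plot of $X$, and one has $\mathrm{ev}\circ r=\mathrm{Ad}(r_1)\circ(\mathrm{Id}_W,r_2)$. Since $\mathrm{Ad}(r_1)$ is smooth by definition of $\calF$, and $(\mathrm{Id}_W,r_2)$ is a plot of $W\times X$, the composite is a plot of $Y$; hence $\mathrm{ev}$ is smooth, and in particular the class of diffeologies on $C^{\infty}(X,Y)$ making $\mathrm{ev}$ smooth is non-empty.

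The key step, and the one demanding the most care, is to show that $\calF$ is the \emph{coarsest} diffeology making $\mathrm{ev}$ smooth, which then identifies $\calF$ with the functional diffeology. So I would let $\calD$ be an arbitrary diffeology on $C^{\infty}(X,Y)$ rendering $\mathrm{ev}$ smooth (with respect to the product diffeology built from $\calD$), fix a $\calD$-plot $p\colon U\to C^{\infty}(X,Y)$, and take an arbitrary plot $q\colon W\to U\times X$ with components $q_1,q_2$. Then $\mathrm{Ad}(p)\circ q=\mathrm{ev}\circ(p\circ q_1,q_2)$; here $p\circ q_1$ is a $\calD$-plot and $q_2$ a plot of $X$, so $(p\circ q_1,q_2)$ is a plot of the product diffeology built from $\calD$, and smoothness of $\mathrm{ev}$ for $\calD$ makes the composite a plot of $Y$. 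As $q$ is arbitrary, $\mathrm{Ad}(p)$ is smooth, i.e. $p\in\calF$, so $\calD\subseteq\calF$. The delicacy here is exactly that the product diffeology on $C^{\infty}(X,Y)\times X$ depends on the diffeology chosen on the first factor, so one cannot compare with $\calF$ plot-by-plot but must run the argument for a generic $\calD$.

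Finally, combining the last two steps shows that $\calF$ is the largest diffeology on $C^{\infty}(X,Y)$ for which $\mathrm{ev}$ is smooth, hence it is the functional diffeology. Consequently $p\colon U\to C^{\infty}(X,Y)$ is a plot for the functional diffeology if and only if $p\in\calF$, that is, if and only if the induced map $U\times X\to Y$, $(u,x)\mapsto p(u)(x)$, is smooth --- which is the assertion of the proposition. Apart from the coarseness argument, every verification is a routine unwinding of the definitions of the product diffeology, the functional diffeology, and smoothness.
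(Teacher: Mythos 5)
Your proposal is correct, and it is complete as written. Note, however, that the paper does not prove this statement at all: it is quoted verbatim from \cite{iglesiasBook}, article 1.57, and used as a black box in later proofs. What you have done is reconstruct essentially the standard argument from that reference: you show that the family $\calF$ of maps $p$ whose adjoint $\mathrm{Ad}(p)$ is smooth is itself a diffeology, that it makes the evaluation map smooth, and that it contains every diffeology making evaluation smooth, hence is the coarsest such and therefore coincides with the functional diffeology. The genuinely delicate point --- which you identify explicitly --- is that ``coarsest diffeology making $\mathrm{ev}$ smooth'' is not obviously well posed, because the product diffeology on $C^{\infty}(X,Y)\times X$ changes with the candidate diffeology on the first factor; your step 3, run for a generic $\calD$, settles both the existence of a largest such diffeology and its identification with $\calF$, which is exactly how the reference proceeds. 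Two micro-remarks, neither a gap: in the sheaf axiom the openness of the sets $q^{-1}(U_i\times X)$ should be attributed to the fact that $\pi_U\circ q$ is an ordinary smooth, hence continuous, map $W\to U$; and smoothness of $\mathrm{Ad}(p)$ automatically forces each $p(u)$ to be a smooth map $X\to Y$ (compose with plots constant in the $U$-variable), so the characterization would even hold for set maps into all maps $X\to Y$, though the statement as given does not need this.
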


We are now ready to prove the following lemma:

\begin{lemma}\label{diff-bilin-twicelin-lem}
The following statements hold:
\begin{enumerate}
\item The map $\tilde{F}$ takes values in $L^{\infty}(V,L^{\infty}(V,W))$; furthermore, it is smooth with respect to the functional diffeologies of
$B^{\infty}(V,W)$ and $L^{\infty}(V,L^{\infty}(V,W))$.
\item The map $\tilde{G}$ is smooth with respect to the functional diffeologies of $L^{\infty}(V,L^{\infty}(V,W))$ and $B^{\infty}(V,W)$.
\item The maps $\tilde{F}$ and $\tilde{G}$ are inverses of each other.
\end{enumerate}
\end{lemma}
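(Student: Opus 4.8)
The plan is to prove the three items in the order they are stated, leaning throughout on Proposition \ref{criterio-funct-diff} to translate every ``smooth plot into a functional-diffeology space'' assertion into the smoothness of an induced map on a product with a domain. For item (1), I would first check that $\tilde F(f) = F$ actually lands in $L^\infty(V,L^\infty(V,W))$, not merely in $L(V,L^\infty(V,W))$: by the previous lemma each $F(v)$ is a smooth linear map, so $F$ is at least set-theoretically a map $V \to L^\infty(V,W)$, and to see it is smooth I take an arbitrary plot $q\colon U \to V$ and must show $F\circ q$ is a plot of $L^\infty(V,W)$; by Proposition \ref{criterio-funct-diff} this is equivalent to smoothness of $(u,v')\mapsto F(q(u))(v') = f(q(u),v')$, which is $f$ composed with the plot $(u,v')\mapsto (q(u),v')$ of $V\times V$ and hence smooth since $f$ is. Then for the smoothness of $\tilde F$ itself I take a plot $r\colon U \to B^\infty(V,W)$ and, applying Proposition \ref{criterio-funct-diff} \emph{twice} (peeling off the two nested functional diffeologies one at a time), reduce to showing that $(u,v,v')\mapsto r(u)(v,v')$ is smooth; but this last map is precisely the induced map of the plot $r$ under the functional diffeology on $B^\infty(V,W)$, composed with an obvious identification of $U\times V\times V$ with $(U\times V)\times V$, so it is smooth by the hypothesis that $r$ is a plot. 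The relation $f = \mbox{\textsc{ev}}\circ(F\times\mbox{Id}_V)$ recorded just before the lemma is the bookkeeping device that makes this ``unwinding'' precise.

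For item (2) the structure is the mirror image: given a plot $r\colon U \to L^\infty(V,L^\infty(V,W))$ I want $\tilde G\circ r$ to be a plot of $B^\infty(V,W)$, which by Proposition \ref{criterio-funct-diff} means smoothness of $(u,(v,v'))\mapsto (\tilde G(r(u)))(v,v') = r(u)(v)(v')$; again applying the proposition twice to the plot $r$ gives exactly the smoothness of $(u,v,v')\mapsto r(u)(v)(v')$, and after reassociating the product $U\times V\times V$ we are done. One must also note in passing that $\tilde G(G) = g$ genuinely lies in $B^\infty(V,W)$, which is the content of the second bullet of the previous lemma applied to $G$.

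For item (3) I would simply compute the two composites. For $v,v'\in V$ one has $(\tilde G\circ\tilde F)(f)(v,v') = \tilde F(f)(v)(v') = F(v)(v') = f(v,v')$ by the very definitions, so $\tilde G\circ\tilde F = \mbox{Id}$ on $B^\infty(V,W)$; and $(\tilde F\circ\tilde G)(G)(v)(v') = \tilde G(G)(v,v') = G(v)(v')$, so $\tilde F\circ\tilde G = \mbox{Id}$ on $L^\infty(V,L^\infty(V,W))$. This item is purely formal once the set-level maps are in hand.

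The only genuinely delicate point—and thus the expected main obstacle—is the bookkeeping in the ``apply Proposition \ref{criterio-funct-diff} twice'' steps of items (1) and (2): one has to be careful that the intermediate object, a plot $U \to L^\infty(V,W)$ arising from a plot $U \to L^\infty(V,L^\infty(V,W))$, is handled correctly, i.e. that the criterion is being applied to the right pair of diffeological spaces at each stage, and that the reassociation isomorphism $(U\times V)\times V \cong U\times (V\times V)$ (which is a diffeomorphism for the product diffeologies) is invoked so that the ``induced map is smooth'' conditions on the two sides literally coincide. Everything else is a direct consequence of the universal/functional-diffeology formalism already set up in the excerpt and of the preceding lemma.
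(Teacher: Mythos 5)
Your proposal is correct and follows essentially the same route as the paper: reduce everything via Proposition \ref{criterio-funct-diff} (applied twice, through the nested functional diffeologies) to the smoothness of the induced maps, use the preceding lemma for the set-theoretic containments, and dispose of item (3) as a purely formal computation. The only cosmetic caveat is that your second application of the criterion is to a map defined on $U\times V$ rather than on a Euclidean domain, so strictly one should either precompose with an arbitrary plot of $U\times V$ first or invoke the standard cartesian-closedness form of the criterion; the paper handles this same point by writing the induced map through the evaluation map, which is smooth by definition of the functional diffeology.
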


\begin{proof}
Let us prove 1. We first prove that $F:V\to L^{\infty}(V,W)$ is smooth. Let $p:U\to V$ be an arbitrary plot of $V$; by Proposition \ref{criterio-funct-diff}, in
order to to show that $F\circ p$ is a plot for the functional diffeology on $L^{\infty}(V,W)$, we need to consider the induced map $U\times V\to W$ that acts by
the assignment $(u,v')\mapsto (F\circ p)(u)(v')=F(p(u))(v')=f(p(u),v')=f\circ(p\times\mbox{Id}_V)(u,v')$ and show that it is smooth. Since $p\times\mbox{Id}_V$
is obviously a plot for the product diffeology on $V\times V$ and $f$ is smooth, $f\circ(p\times\mbox{Id}_V)$ is a plot of $W$, so it is naturally smooth.

Let us now show that $\tilde{F}:B^{\infty}(V,W)\to L^{\infty}(V,L^{\infty}(V,W))$ is smooth; taking $p:U\to B^{\infty}(V,W)$ a plot of $B^{\infty}(V,W)$, we need
to show that $\tilde{F}\circ p$ is a plot of $L^{\infty}(V,L^{\infty}(V,W))$. To do this, we apply again Proposition \ref{criterio-funct-diff}: it suffices to
consider the map $U\times V\to L^{\infty}(V,W)$ acting by
$(u,v)\mapsto(\tilde{F}\circ p)(u)(v)=\tilde{F}(p(u))(v)=\mbox{\textsc{ev}}\circ((F\circ p)\times\mbox{Id}_V)(u,v)$. Having already established that $F$ is
smooth, we can now conclude that $\tilde{F}$ is smooth as well.

Let us now prove the second point, \emph{i.e.}, that $\tilde{G}:L^{\infty}(V,L^{\infty}(V,W))\to B^{\infty}(V,W)$ is smooth, \emph{i.e.}, taking an arbitrary
plot $p:U\to L^{\infty}(V,L^{\infty}(V,W))$, we need to show that $\tilde{G}\circ p$ is a plot of $B^{\infty}(V,W)$. Applying again Proposition
\ref{criterio-funct-diff}, we consider the map $U\times(V\times V)\to W$ defined by
$(u,(v,v'))\mapsto(\tilde{G}\circ p)(u)(v,v')=
(\mbox{\textsc{ev}}\circ(p(u)\times\mbox{Id}_V))(v,v')=(\mbox{\textsc{ev}}\circ(p\times\mbox{Id}_{V\times V}))(u,(v,v'))$, which allows us to conclude that the
map is smooth, and therefore $\tilde{G}\circ p$ is a plot of $B^{\infty}(V,W)$; whence the conclusion.

To conclude, we observe that the third point follows immediately from the definitions of the two maps.
\end{proof}

We now get the desired conclusion, which does mimic what happens in the usual linear algebra case:

\begin{thm}
Let $V$ and $W$ be two diffeological vector spaces, let $B^{\infty}(V,W)$ be the space of all smooth bilinear maps $V\times V\to W$ considered with the
functional diffeology, and let $L^{\infty}(V,L^{\infty}(V,W))$ be the space of all smooth linear maps $V\to L^{\infty}(V,W)$ endowed, it as well, with the
functional diffeology. Then the spaces $B^{\infty}(V,W)$ and $L^{\infty}(V,L^{\infty}(V,W))$ are diffeomorphic as diffeological vector spaces.
\end{thm}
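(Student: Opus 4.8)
The plan is to show that the maps $\tilde{F}$ and $\tilde{G}$ constructed above are mutually inverse diffeomorphisms, since this immediately yields that $B^{\infty}(V,W)$ and $L^{\infty}(V,L^{\infty}(V,W))$ are diffeomorphic; the fact that they are then diffeomorphic \emph{as diffeological vector spaces} follows once we check that $\tilde{F}$ (equivalently $\tilde{G}$) is linear, which is immediate from the definitions since the correspondence $f\leftrightarrow F$ respects addition and scalar multiplication argument-wise. Almost all of the work has in fact already been done: Lemma \ref{diff-bilin-twicelin-lem} part 1 gives that $\tilde{F}$ is well-defined (lands in $L^{\infty}(V,L^{\infty}(V,W))$) and smooth, part 2 gives that $\tilde{G}$ is smooth, and part 3 gives that $\tilde{F}$ and $\tilde{G}$ are inverse to each other.

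Concretely, the steps I would carry out, in order, are: first, invoke Lemma \ref{diff-bilin-twicelin-lem} to record that $\tilde{F}$ and $\tilde{G}$ are smooth and mutually inverse; second, observe that a smooth bijection with smooth inverse is by definition a diffeomorphism, so $B^{\infty}(V,W)\cong L^{\infty}(V,L^{\infty}(V,W))$ as diffeological spaces; third, note that both $B^{\infty}(V,W)$ and $L^{\infty}(V,L^{\infty}(V,W))$ are diffeological vector spaces — the former as a subspace of the functional diffeology on $C^{\infty}(V\times V,W)$ which carries a natural vector space diffeology, the latter likewise as a subspace of $C^{\infty}(V,L^{\infty}(V,W))$, using that $L^{\infty}(V,W)$ is itself a diffeological vector space; fourth, verify that $\tilde{F}$ is $\matR$-linear by checking $\tilde{F}(f_1+\lambda f_2)=\tilde{F}(f_1)+\lambda\tilde{F}(f_2)$ directly from $F(v)(v')=f(v,v')$, which is a one-line computation. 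Combining these, $\tilde{F}$ is a linear diffeomorphism, hence an isomorphism of diffeological vector spaces.

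The only point requiring a modicum of care — and the closest thing to an obstacle — is making sure that the smoothness of $\tilde{G}$ really does guarantee smoothness of the \emph{inverse} of $\tilde{F}$ in the diffeological sense, i.e. that ``smooth bijection with smooth inverse'' is the correct and sufficient notion of diffeomorphism here; but this is exactly the definition of diffeomorphism for diffeological spaces, so no subtlety actually arises. A secondary bookkeeping point is confirming that the vector space diffeology on $L^{\infty}(V,L^{\infty}(V,W))$ inherited from the functional diffeology is indeed a vector space diffeology, i.e. that addition and scalar multiplication are smooth; this follows from the general fact (used implicitly throughout the paper) that $C^{\infty}(X,E)$ is a diffeological vector space whenever $E$ is, applied twice. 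With these observations in place the theorem is a direct consequence of the two preceding lemmas, and essentially no new computation is needed.

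\begin{proof}
By Lemma \ref{diff-bilin-twicelin-lem}, the map $\tilde{F}$ takes values in $L^{\infty}(V,L^{\infty}(V,W))$ and is smooth, the map $\tilde{G}$ is smooth, and $\tilde{F}$ and $\tilde{G}$ are inverses of each other. Hence $\tilde{F}:B^{\infty}(V,W)\to L^{\infty}(V,L^{\infty}(V,W))$ is a smooth bijection with smooth inverse $\tilde{G}$, i.e. a diffeomorphism for the functional diffeologies on the two spaces.

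It remains to observe that this diffeomorphism is an isomorphism of diffeological vector spaces. On one hand, both $B^{\infty}(V,W)$ and $L^{\infty}(V,L^{\infty}(V,W))$ are diffeological vector spaces: each is a vector subspace, endowed with the subset diffeology, of a function space $C^{\infty}(V\times V,W)$, respectively $C^{\infty}(V,L^{\infty}(V,W))$, equipped with its functional diffeology, and such a function space $C^{\infty}(X,E)$ is a diffeological vector space whenever the target $E$ is (here one uses in addition that $L^{\infty}(V,W)$ is a diffeological vector space, being a subspace of $C^{\infty}(V,W)$). On the other hand, $\tilde{F}$ is $\matR$-linear: for $f_1,f_2\in B^{\infty}(V,W)$ and $\lambda\in\matR$ one has, for all $v,v'\in V$,
$$\tilde{F}(f_1+\lambda f_2)(v)(v')=(f_1+\lambda f_2)(v,v')=f_1(v,v')+\lambda f_2(v,v')=\big(\tilde{F}(f_1)(v)+\lambda\tilde{F}(f_2)(v)\big)(v'),$$
so that $\tilde{F}(f_1+\lambda f_2)=\tilde{F}(f_1)+\lambda\tilde{F}(f_2)$. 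Therefore $\tilde{F}$ is a linear diffeomorphism, hence an isomorphism of diffeological vector spaces, and $B^{\infty}(V,W)$ and $L^{\infty}(V,L^{\infty}(V,W))$ are diffeomorphic as diffeological vector spaces.
\end{proof}
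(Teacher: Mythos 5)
Your proposal is correct and follows essentially the same route as the paper: it invokes Lemma \ref{diff-bilin-twicelin-lem} to get that $\tilde{F}$ and $\tilde{G}$ are smooth mutually inverse maps, and then checks linearity and that the functional diffeologies involved are vector space diffeologies. The extra explicit computation of linearity and the remark on subset/functional diffeologies are just fuller versions of what the paper's proof states in one line.
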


\begin{proof}
The desired diffeomorphism as diffeological spaces is given by the maps $\tilde{F}$ and $\tilde{G}$ of Lemma \ref{diff-bilin-twicelin-lem}. It remains to note
that these two maps are also linear (actually, as vector spaces maps they coincide with the usual constructions), and that all the functional diffeologies
involved are vector space diffeologies.
\end{proof}

\subsection{The diffeological dual}

We now comment on the already-standard notion of the diffeological dual $V^*=L^{\infty}(V,\matR)$ (\cite{vincent}, \cite{wu}). Note that, as has already been
observed in \cite{wu}, even in the finite-dimensional case the diffeological dual might be much smaller than the usual one, which corresponds precisely to the
case $L^{\infty}(V,\matR)<L(V,\matR)$ (also illustrated by our Example \ref{linear-nonsmooth-example}). A simple but natural question to ask at this point is,
suppose that $V$ is a finite-dimensional diffeological vector space such that $L^{\infty}(V,\matR)=L(V,\matR)$ as vector spaces; does this imply that $V$ is also
diffeomorphic to $V^*=L^{\infty}(V,\matR)$? The following proposition provides a positive answer to this question.\footnote{I would like to thank the anonymous
referee for pointing out a much simpler proof of this statement.}

\begin{prop}\label{diffeo-duality-prop}
Let $V$ be a finite-dimensional diffeological vector space such that $L^{\infty}(V,\matR)=L(V,\matR)$, \emph{i.e.}, such that every real-valued linear map from
$V$ is smooth. Then $V$ is diffeomorphic to $L^{\infty}(V,\matR)=L(V,\matR)$, \emph{i.e.}, to its diffeological dual.
\end{prop}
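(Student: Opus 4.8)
The plan is to show that the hypothesis forces the diffeology of $V$ to be the standard (equivalently, fine) one, that the same holds for the diffeological dual, and then to conclude immediately.

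First, fix a linear isomorphism $V\cong\matR^n$ with $n=\dim V$, so that the underlying set $\matR^n$ carries some vector space diffeology $\calD_V$; let $\pi_1,\ldots,\pi_n$ be the associated coordinate functionals. One inclusion is automatic: since the fine diffeology is the \emph{finest} vector space diffeology and, on a finite-dimensional space, coincides with the standard diffeology of $\matR^n$, every usual smooth map $U\to\matR^n$ is a plot of $\calD_V$. For the reverse inclusion I would invoke the hypothesis: each $\pi_i$ lies in $L(V,\matR)=L^{\infty}(V,\matR)$, hence is smooth, so for every plot $p:U\to V$ of $\calD_V$ the composite $\pi_i\circ p$ is a usual smooth function; writing $p=(\pi_1\circ p,\ldots,\pi_n\circ p)$ shows that $p$ is itself usually smooth. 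Thus $\calD_V$ is exactly the standard diffeology, i.e.\ $V$ is diffeomorphic to the fine space $\matR^n$.

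Next I would identify $V^{*}=L^{\infty}(V,\matR)=L(V,\matR)$, which as a vector space is $n$-dimensional, and check that its functional diffeology is again the standard one. By Proposition~\ref{criterio-funct-diff}, a map $q:U\to V^{*}$ is a plot iff $(u,v)\mapsto q(u)(v)$ is smooth on $U\times V=U\times\matR^n$; expanding $q(u)$ in the dual basis as $q(u)(\cdot)=\sum_i a_i(u)\pi_i(\cdot)$, this induced map is $(u,v)\mapsto\sum_i a_i(u)v_i$, which is smooth precisely when each $a_i:U\to\matR$ is smooth (evaluate at the basis vectors in one direction, expand the sum in the other). Hence, under the dual basis, the functional diffeology of $V^{*}$ is the standard diffeology of $\matR^n$.

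Finally, both $V$ and $V^{*}$ are diffeomorphic to the standard (fine) $\matR^n$, hence to each other; concretely, any linear isomorphism $V\to V^{*}$ — for instance the one coming from the chosen basis, or from any inner product on $V$ — is smooth with smooth inverse, because every linear map between fine finite-dimensional spaces is smooth. I do not expect any real obstacle here: the only points needing care are keeping the finer/coarser convention straight when using that the standard diffeology is the finest vector space diffeology, and the short computation identifying the functional diffeology of the dual — consistent with the expectation that a simple argument suffices.
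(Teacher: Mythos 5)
Your proof is correct, but it takes a genuinely different route from the paper's. The paper (following the referee's suggestion) simply writes down the explicit map $v\mapsto\sum_{i=1}^n v_i^*(v)\,v_i^*$ for a basis $\{v_1,\ldots,v_n\}$ and its dual basis, notes it lands in $V^*$ because each $v_i^*$ is smooth by hypothesis, exhibits the inverse $v^*\mapsto\sum_i v^*(v_i)v_i$, and observes that both directions are smooth (the forward map because the $v_i^*$ are smooth and the vector space operations of $V^*$ are smooth, the inverse because the evaluations $v^*\mapsto v^*(v_i)$ are smooth for the functional diffeology); no identification of the diffeologies involved is needed. You instead prove a sharper structural fact: the hypothesis $L^{\infty}(V,\matR)=L(V,\matR)$ forces the diffeology of $V$ to be the standard (fine) one --- the coordinate functionals being smooth pins every plot down to a usual smooth map, while the opposite inclusion holds for any vector space diffeology since the fine diffeology is the finest such and is standard in finite dimensions --- and then, via Proposition~\ref{criterio-funct-diff}, that the functional diffeology on $V^*$ is likewise standard in the dual basis, after which any linear isomorphism $V\to V^*$ is automatically a diffeomorphism because all linear maps between fine spaces are smooth. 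Each step you use is justified by facts stated in the paper, so there is no gap. What your approach buys is the stronger conclusion (such a $V$ is necessarily fine, and its dual carries the standard diffeology), which explains \emph{why} the proposition holds and makes it immediate; what the paper's approach buys is brevity and independence from any computation of the functional diffeology, since it only checks smoothness of one concrete map and its inverse.
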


\begin{proof}
Let $\{v_1,\ldots,v_n\}$ be a basis of $V$, and let $v_i^*$ be the corresponding dual basis in the usual sense (\emph{i.e.}, if
$v=\alpha_1 v_1+\ldots+\alpha_n v_n$ then $v_i^*(v)=\alpha_i$). Observe that each $v_i^*$ is smooth by assumption.

Consider the linear map $v\mapsto\sum_{i=1}^n v_i^*(v)v_i^*$; this is a map from $V$ that takes values in $V^*$ by the already-made observation. Since $V$ is
finite-dimensional, this map is obviously bijective, with the inverse given by $v^*\mapsto v^*(v_i)v_i$. It remains to observe that this inverse is also smooth,
and so the map indicated is indeed a diffeomorphism of diffeological vector spaces $V$ and $V^*$.
\end{proof}

It is relatively easy to establish that the dual of a finite-dimensional diffeological vector space is always a standard space.

\begin{thm}\label{dual-is-standard-thm}
Let $V$ be a finite-dimensional diffeological vector space, and let $V^*$ be its diffeological dual. Then the functional diffeology on $V^*$ is standard.
\end{thm}

This was already proven in Theorem 3.11, \cite{me2018}; here we give a more direct proof.

\begin{proof}
Let $v_1,\ldots,v_n$ be any basis of $V$. Obviously, the linear maps $v^1,\ldots,v^n$ given by $v^i(v_j)=\delta_{ij}$ form a basis of a usual dual of $V$
(more precisely, of its underlying vector space), of which the diffeological dual $V^*$ is a subspace. Let $q:U\to V^*$ be a plot of $V^*$ for its functional
diffeology; write $q(u)=q_1(u)v^1+\ldots+q_n(u)v^n$, where each $q^i$ is the composition of $q$ with the projection of the usual dual onto (the span of) $v^i$.
Observe immediately that the evaluation of $q$ on any constant plot must be a smooth function; in particular, the evaluation of $q$ on the constant plot with
value $v_i$ must be smooth. Therefore $q_1,\ldots,q_n$ are all smooth functions, which means precisely that the diffeology of $V$ is standard.
\end{proof}

The dual being a standard space, all usual dual maps are automatically smooth.

\subsection{The tensor product}

In this section we discuss the definition (as given in \cite{wu}), and the relative properties, of the tensor product; we speak mostly of the case of two
factors, given that the extension to the case of more than two spaces is \emph{verbatim}.

\paragraph{The tensor product of maps} Let us consider two (smooth) linear maps between diffeological vector spaces, $f:V\to V'$ and $g:W\to W'$. As usual, we
have the tensor product map $f\otimes g:V\otimes W\to V'\otimes W'$, defined by $(f\otimes g)(\sum v_i\otimes w_i)=\sum f(v_i)\otimes g(w_i)$. We observe that
$f\otimes g$ is a smooth map (with respect to the tensor product diffeologies on $V\otimes W$ and $V'\otimes W'$) due to the properties of the product and the
quotient diffeologies.

\paragraph{The tensor product and the direct sum} Let $V_1$, $V_2$, $V_3$ be vector spaces; recall that in the usual linear algebra the tensor product is
distributive with respect to the direct sum, \emph{i.e.}: $$V_1\otimes(V_2\oplus V_3)\cong (V_1\otimes V_3)\oplus(V_2\otimes V_3),$$ via a canonical isomorphism,
which we denote by $T_{\otimes,\oplus}$. Now, if $V_1$, $V_2$, $V_3$ are diffeological vector spaces, then so are $V_1\otimes(V_2\oplus V_3)$ and $(V_1\otimes
V_3)\oplus(V_2\otimes V_3)$. It turns out, as already mentioned in \cite{wu}, Remark 3.9 (2), that the standard isomorphism between these spaces is also a
diffeomorphism. Below we provide an explicit proof of that statement.

\begin{lemma}\label{tensor-prod-is-distributive-lem} \emph{(\cite{wu})}
Let $V_1$, $V_2$, $V_3$ be diffeological vector spaces, and let $T_{\otimes,\oplus}:V_1\otimes(V_2\oplus V_3)\to(V_1\otimes V_3)\oplus(V_2\otimes V_3)$ be the
standard isomorphism. Then $T_{\otimes,\oplus}$ is smooth.
\end{lemma}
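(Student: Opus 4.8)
The plan is to reduce everything to the defining (pushforward) property of the tensor product diffeology. By definition the diffeology on $V_1\otimes(V_2\oplus V_3)$ is the pushforward of the product diffeology on $V_1\times(V_2\oplus V_3)$ by the universal bilinear map $\pi\colon V_1\times(V_2\oplus V_3)\to V_1\otimes(V_2\oplus V_3)$; equivalently, by the universal property recalled in Section 1 (Theorem 2.3.5 of \cite{vincent}), a linear map from $V_1\otimes(V_2\oplus V_3)$ into any diffeological vector space is smooth if and only if its composition with $\pi$ is a smooth (bilinear) map. So the first thing I would do is invoke this to reduce the claim to showing that $T_{\otimes,\oplus}\circ\pi$ is smooth.

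Next I would observe that $T_{\otimes,\oplus}\circ\pi$ is the map $(v_1,(v_2,v_3))\mapsto (v_1\otimes v_2)\oplus(v_1\otimes v_3)$ with values in the direct sum $(V_1\otimes V_2)\oplus(V_1\otimes V_3)$ (we use the correct labelling of the summands). Since the direct sum carries the product diffeology, a map into it is smooth precisely when both of its components are, so it suffices to treat, say, the first component $(v_1,(v_2,v_3))\mapsto v_1\otimes v_2$. This component factors as
\[
V_1\times(V_2\oplus V_3)\xrightarrow{\ \mathrm{id}_{V_1}\times\mathrm{pr}_{V_2}\ }V_1\times V_2\xrightarrow{\ \pi_{12}\ }V_1\otimes V_2,
\]
where $\mathrm{pr}_{V_2}\colon V_2\oplus V_3\to V_2$ is the projection, smooth because the direct sum has the product diffeology; hence $\mathrm{id}_{V_1}\times\mathrm{pr}_{V_2}$ is smooth by the definition of the product diffeology, and $\pi_{12}$ is smooth by the very definition of the tensor product diffeology. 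The second component is handled symmetrically. This gives smoothness of $T_{\otimes,\oplus}\circ\pi$, and hence of $T_{\otimes,\oplus}$.

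For the stronger assertion recorded in the surrounding text — that $T_{\otimes,\oplus}$ is in fact a diffeomorphism — I would argue the same way for the inverse $T_{\otimes,\oplus}^{-1}\colon (V_1\otimes V_2)\oplus(V_1\otimes V_3)\to V_1\otimes(V_2\oplus V_3)$: its source is a direct sum, so (using that $T_{\otimes,\oplus}^{-1}$ is linear and that addition in the target is smooth) it is enough to check that the two restrictions $V_1\otimes V_2\to V_1\otimes(V_2\oplus V_3)$ and $V_1\otimes V_3\to V_1\otimes(V_2\oplus V_3)$ are smooth; but each of these is the tensor product of $\mathrm{id}_{V_1}$ with the canonical inclusion $V_2\hookrightarrow V_2\oplus V_3$ (resp. $V_3\hookrightarrow V_2\oplus V_3$), which is smooth by the paragraph ``The tensor product of maps'' above.

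I do not expect a genuine obstacle here. The one point to be careful about is to express smoothness of maps out of $V_1\otimes(V_2\oplus V_3)$ via the universal map $\pi$ (equivalently, via the quotient characterization), rather than by unwinding plots of the tensor product directly — the latter would force bookkeeping of the open neighbourhoods on which the local lifts of a plot are defined, with no real gain. The only other thing worth stating explicitly is that all the direct sums involved carry the product diffeology, which is exactly what permits checking smoothness one summand (or one factor) at a time.
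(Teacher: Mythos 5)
Your proposal is correct and follows essentially the same route as the paper: both reduce, via the pushforward/quotient characterization of the tensor product diffeology, to a map defined at the level of the product $V_1\times(V_2\oplus V_3)$, and then verify smoothness one summand (or factor) at a time using the product diffeology. The only differences are cosmetic --- the paper checks the covering map by decomposing an arbitrary plot into component plots instead of factoring through the projections and the universal maps as you do, and your relabelling of the target as $(V_1\otimes V_2)\oplus(V_1\otimes V_3)$ silently corrects what is evidently a typo in the statement.
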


\begin{proof}
By the properties of the quotient diffeology, it is sufficient to show that the covering map
$\tilde{T}_{\ast,\oplus}:V_1\ast(V_2\oplus V_3)\to(V_1\ast V_3)\oplus(V_2\ast V_3)$ is smooth. Let $p:U\to V_1\ast(V_2\oplus V_3)$ be a plot; we must
show that $\tilde{T}_{\ast,\oplus}\circ p$ is a plot for $(V_1\ast V_3)\oplus(V_2\ast V_3)$. Let $\pi_1:V_1\ast(V_2\oplus V_3)\to V_1$ and
$\pi_{2,3}:V_1\ast(V_2\oplus V_3)\to(V_2\oplus V_3)$ be the natural projections; observe that by definition of the product diffeology, $\pi_{2,3}$ writes (at
least locally) as $\pi_{2,3}=(p_2,p_3)$, where $p_2$ is a plot of $V_2$ and $p_3$ is a plot of $V_3$.

Write now $\tilde{T}_{\ast,\oplus}\circ p$ as $\tilde{T}_{\ast,\oplus}\circ p=(p',p'')$; observe that $p'=(\pi_1\circ p,p_2)$, while $p''=(\pi_1\circ p,p_3)$.
These are plots for the sum diffeology on $(V_1\ast V_3)\oplus(V_2\ast V_3)$, hence the conclusion.
\end{proof}

\paragraph{The tensor product $V\otimes W$ as a function space} Recall that in the usual linear algebra context the tensor product of two finite-dimensional
vector spaces $V\otimes W$ is isomorphic to the spaces $L(V^*,W)$, the space of linear maps $V^*\to W$, and $L(W^*,V)$, the space of linear maps $W^*\to V$, via
isomorphisms given by:
\begin{itemize}
\item for $f\in V^*$, $v\in V$, and $w\in W$ we set $(v\otimes w)(f)=f(v)w$, extending by linearity;
\item for $g\in W^*$, $v\in V$, and $w\in W$ we set $(v\otimes w)(g)=g(w)v$, extending by linearity.
\end{itemize}

The question that we consider now is whether these isomorphisms continue to exist if all spaces we consider are finite-dimensional diffeological vector spaces,
all linear maps are smooth, and all function spaces are endowed with their functional diffeologies. The observations made regarding the frequently substantial
difference between a diffeological vector space $V$ and its diffeological dual $V^*$ suggest that we consider again one of our examples.

\begin{example}\label{tensor-not-functional-example}
Let $V=\matR^n$ for $n\geqslant 2$ with the coarse diffeology, and let $W=\matR$ with the standard diffeology. Then, as shown in Example
\ref{linear-nonsmooth-example}, the diffeological dual of $V$ is trivial: $V^*=\{0\}$; this obviously implies that $L^{\infty}(V^*,W)=\{0\}$. Recall also that,
the diffeology of $W$ being fine, its dual is isomorphic to $W$, so we have $W\cong W^*\cong\matR$; furthermore, as it occurs for all fine diffeological vector
spaces (see \cite{iglesiasBook}), we have $L^{\infty}(W^*,V)=L(W^*,V)\cong V$.

Since the total space of the diffeological tensor product $V\otimes W$ is the same as that of the usual tensor product, it is isomorphic to $V$. Therefore there
is \textbf{not} an isomorphism between $V\otimes W$ and $L^{\infty}(V^*,W)$, the two spaces being different as sets. On the other hand, $L^{\infty}(W^*,V)$ and
$V\otimes W$ are isomorphic as usual vector spaces; it is easy to see that they are also diffeomorphic (this follows from the fact that $V$ has the coarse
diffeology\footnote{Consider the obvious map $F:V\to L(\matR,V)=L^{\infty}(\matR,V)$ given by $F(v)(x)=xv$; it is obviously bijective, and it is smooth by
Proposition \ref{criterio-funct-diff}. Indeed, for any plot $p:U\to V$ we need that $F\circ p$ be a plot, which is equivalent to the map $U\times\matR\to V$
given by $(u,x)\mapsto(F\circ p)(u)(x)=xp(u)$ being smooth. But simply due to the fact that it is a map in $V$, that has the coarse diffeology, it is a plot of
it, so the conclusion.}).
\end{example}

The example just made shows that in general, at least one of these classical isomorphisms might fail to exist (and at a very basic level). We may wish however to
see what could be kept of the standard isomorphisms, in the sense that the two maps $V\otimes W\to L(V^*,W)$ and $V\otimes W\to L(W^*,V)$ are still defined; we
wonder if their ranges consist of smooth maps and, if so, whether they are smooth.

\begin{prop}
Let $V$, $W$ be two finite-dimensional diffeological vector spaces. Then:
\begin{enumerate}
\item If $\hat{F}:V\otimes W\to L(V^*,W)$ is the map defined, via linearity, by $v\otimes w\mapsto[\hat{F}(v\otimes w)(f)=f(v)w]$ then $\hat{F}$ takes values in
$L^{\infty}(V^*,W)$. Furthermore, as a map $V\otimes W\to L^{\infty}(V^*,W)$ between diffeological spaces, it is smooth;
\item If $\hat{G}:V\otimes W\to L(W^*,V)$ is the map defined, via linearity, by $v\otimes w\mapsto[\hat{G}(v\otimes w)(g)=g(w)v]$ then $\hat{G}$ takes values in
$L^{\infty}(W^*,V)$. Furthermore, as a map $V\otimes W\to L^{\infty}(W^*,V)$ between diffeological spaces, it is smooth.
\end{enumerate}
\end{prop}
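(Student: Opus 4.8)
The plan is to prove each of the two statements by the same two-step strategy: first verify that $\hat{F}$ (resp. $\hat{G}$) lands in the space of \emph{smooth} linear maps, then verify smoothness of $\hat{F}$ (resp. $\hat{G}$) as a map between the diffeological vector spaces in question. Since the two statements are symmetric (interchange the roles of $V$ and $W$), I would write out the argument for $\hat{F}$ in full and then remark that $\hat{G}$ is handled \emph{verbatim}.

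For the first step, fix a decomposable tensor $v\otimes w$ and consider $\hat{F}(v\otimes w):V^*\to W$, $f\mapsto f(v)w$. This is linear in $f$; to see it is smooth, take a plot $q:U\to V^*$ and note that, by Proposition~\ref{criterio-funct-diff} applied to $V^*=C^{\infty}(V,\matR)$, the induced map $(u,v')\mapsto q(u)(v')$ is smooth, so in particular $u\mapsto q(u)(v)$ is a smooth map $U\to\matR$ (compose with the plot $u\mapsto v$); hence $u\mapsto q(u)(v)\,w=\hat{F}(v\otimes w)(q(u))$ is a plot of $W$, using that scalar multiplication on $W$ is smooth. For a general element $\sum_i v_i\otimes w_i$ one extends by linearity, using that $L^{\infty}(V^*,W)$ is a linear subspace of $L(V^*,W)$; alternatively, since $V$ and $W$ are finite-dimensional one may simply fix bases and note that $\hat{F}$ of any element is a finite $\matR$-linear combination of the maps just shown to be smooth.

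For the second step, I would use the tensor product diffeology and Proposition~\ref{criterio-funct-diff} together. Because $V\otimes W$ carries the pushforward of the product diffeology on $V\times W$ by the universal map $\pi$, a map out of $V\otimes W$ is smooth iff its precomposition with $\pi$ is smooth; and a map $p:U\to V\otimes W$ being a plot means (locally) $p=\pi\circ(p_V,p_W)$ for plots $p_V:U\to V$, $p_W:U\to W$. So to check $\hat{F}$ is smooth it suffices to show: for such $p$, the map $\hat{F}\circ p:U\to L^{\infty}(V^*,W)$ is a plot, which by Proposition~\ref{criterio-funct-diff} amounts to the smoothness of
$$
U\times V^*\to W,\qquad (u,f)\mapsto (\hat{F}\circ p)(u)(f)=f(p_V(u))\,p_W(u).
$$
To verify \emph{this} is smooth, pick any plot $(p_1,p_{V^*}):U'\to U\times V^*$; unravelling, the composite sends $u'$ to $p_{V^*}(u')\bigl(p_V(p_1(u'))\bigr)\,p_W(p_1(u'))$, and here $p_V\circ p_1$ and $p_W\circ p_1$ are plots of $V$ and $W$, while $(u',v')\mapsto p_{V^*}(u')(v')$ is smooth by Proposition~\ref{criterio-funct-diff}; composing the latter with the plot $u'\mapsto(u',p_V(p_1(u')))$ of $U'\times V$ gives a smooth $\matR$-valued map, and multiplying by the $W$-plot $p_W\circ p_1$ (scalar multiplication on $W$ being smooth) yields a plot of $W$. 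This shows the required smoothness, hence $\hat{F}$ is smooth; the argument for $\hat{G}$ is identical with $g(w)v$ in place of $f(v)w$.

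I do not expect a serious obstacle here: the whole proof is an exercise in chasing plots through Proposition~\ref{criterio-funct-diff} and the defining universal property of the tensor product diffeology, exactly as in the proof of Lemma~\ref{diff-bilin-twicelin-lem}. The only mildly delicate point is the bookkeeping for a general (non-decomposable) element of $V\otimes W$: one must either argue by $\matR$-linearity that $\hat{F}$ of a sum is smooth because each summand is and $L^{\infty}(V^*,W)$ is closed under sums, or — cleaner in the finite-dimensional setting — pass to bases of $V$ and $W$ from the outset, so that every element of $V\otimes W$ is already a linear combination of the finitely many decomposable basis tensors $v_i\otimes w_j$, and $\hat F$ is manifestly linear with smooth values on each of these.
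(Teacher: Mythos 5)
Your proof is correct and follows essentially the same route as the paper: both steps rest on Proposition~\ref{criterio-funct-diff}, the smoothness of constant plots and of scalar multiplication, and (for the second step) the local lifting of plots of $V\otimes W$ through $V\times W$ together with the smoothness of the evaluation map. If anything, your unwinding of the plot $p=\pi\circ(p_V,p_W)$ and your explicit remark on extending by linearity are more detailed than the paper's rather terse second paragraph, but the underlying argument is the same.
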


\begin{proof}
Let us prove 1. We need to show that $\hat{F}$ is a smooth map that takes values in $L^{\infty}(V^*,W)$. To prove the latter, it is enough to show that
$\hat{F}(v\otimes w)$ is smooth, for any $v\in V$ and $w\in W$. Let us fix $v\in V$ and $w\in W$; we need to show that for any plot $p:U\to V^*$ the composition
$\hat{F}(v\otimes w)\circ p$ is a plot of $W$. Writing explicitly $(\hat{F}(v\otimes w)\circ p)(u)=\hat{F}(v\otimes w)(p(u))=p(u)(v)w$, we recall that any
constant map on a domain is a plot for any diffeology, so the map $c_w:U\to W$ that sends everything in $w$ is a plot of $W$. Finally, the map
$(u,v)\mapsto p(u)(v)$ is a smooth map to $\matR$, by Proposition \ref{criterio-funct-diff} and because $p$ is a plot of $V^*=L^{\infty}(V,\matR)$ whose
diffeology is functional; recalling that multiplication by scalar is smooth for any diffeological vector space, we get the conclusion.

Let us now prove that $\hat{F}$ is a smooth map $V\otimes W\to L^{\infty}(V^*,W)$; by Proposition \ref{criterio-funct-diff} we need to prove that the induced map
$V^*\times U\to W$ is smooth.\footnote{Note the change in the order of factors, for formal purposes.} This map acts by sending each $(f,u)$ (where $f\in V^*$) to
$(\hat{F}\circ p)(u)(f)$ and so it writes as $(f,u)\mapsto(\mbox{\textsc{ev}}_{V^*}\otimes\mbox{Id}_W)(\mbox{Id}_{V^*}\times p)(f,u)$; the diffeology of $V^*$
being functional, the evaluation map is smooth, therefore so is $\hat{F}\circ p$, whence the conclusion.

The proof of 2 is completely analogous, so we omit it.
\end{proof}

\begin{oss}
Example \ref{tensor-not-functional-example} also illustrates that, in general, there is \textbf{not} an analogue of the classical isomorphism
$V^*\otimes V\cong L^{\infty}(V,V)$: it suffices to consider the same $V$, that is, $\matR^n$ with the coarse diffeology. Then the product on the left is the
trivial space, $V^*$ being the trivial space, whereas the space on the right consists of \emph{all} linear maps $V\to V$ (since the coarse diffeology includes
any map into $V$, all of these maps are automatically smooth).
\end{oss}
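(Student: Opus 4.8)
The plan is to show that the two diffeological vector spaces $V^*\otimes V$ and $L^\infty(V,V)$ fail to be isomorphic already as plain vector spaces, so that no diffeomorphism — indeed no isomorphism of any kind — between them can exist; the diffeologies will turn out to be irrelevant to the obstruction.

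First I would identify the left-hand side. Taking $V=\matR^n$ with $n\geqslant 2$ and the coarse diffeology, Example \ref{linear-nonsmooth-example} (recalled in Example \ref{tensor-not-functional-example}) gives $V^*=L^\infty(V,\matR)=\{0\}$; hence the underlying vector space of $V^*\otimes V$ is $\{0\}\otimes V=\{0\}$. Next I would identify the right-hand side: since the coarse diffeology on $V$ consists of \emph{all} set maps into $V$ from domains of Euclidean spaces, for any linear map $\ell\colon V\to V$ and any plot $p$ of its source the composite $\ell\circ p$ is again a set map into $V$, hence a plot of the (coarse) target; so $\ell$ is smooth, and therefore $L^\infty(V,V)=L(V,V)$, which has dimension $n^2\geqslant 4$. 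Comparing dimensions, $\{0\}$ and $L(V,V)$ cannot be isomorphic as vector spaces, and a fortiori not diffeomorphic, which is precisely the assertion of the Observation.

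I do not anticipate any genuine obstacle: the statement is an immediate consequence of Example \ref{linear-nonsmooth-example}. The only point worth stressing is conceptual rather than technical — the obstruction is visible already at the level of underlying vector spaces, because passing to the diffeological dual can collapse $V^*$ to zero while the requirement of smoothness imposes nothing at all on $L(V,V)$ when $V$ carries the coarse diffeology.
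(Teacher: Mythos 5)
Your argument is correct and coincides with the paper's own reasoning: $V^*\otimes V$ is trivial because $V^*=\{0\}$ for the coarse diffeology (Example \ref{linear-nonsmooth-example}), while every linear map into the coarse $V$ is automatically smooth, so $L^{\infty}(V,V)=L(V,V)$ is nontrivial. The explicit dimension count is a harmless addition; no further comment is needed.
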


\paragraph{Tensor product of duals and the dual of a tensor product} Recall, once again, that for usual vector spaces there is a standard isomorphism
$V^*\otimes W^*\cong(V\otimes W)^*$; it turns out that this is also true for (finite-dimensional) diffeological vector spaces. Observe immediately that by
Theorem \ref{dual-is-standard-thm} and the definition of the tensor product diffeology, both $V^*\otimes W^*$ and $(V\otimes W)^*$ are standard spaces, so for
them to be diffeomorphic it is sufficient that they have the same dimension.

\begin{thm}\label{tensor-dual:dual-tensor:thm}
Let $V$, $W$ be two finite-dimensional diffeological vector spaces. Then $\dim((V\otimes W)^*)=\dim(V^*)\cdot\dim(W^*)$.
\end{thm}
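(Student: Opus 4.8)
We already know from the preceding Proposition that $F : V^* \otimes W^* \to (V \otimes W)^*$ is smooth, and from Lemma \ref{tensor-duals:dual-tensors:lem} together with the remark following it that $F$ is injective and linear. So the plan breaks into two parts: first establish that $F$ is a bijection of the underlying vector spaces, and second show that the inverse $F^{-1}$ is smooth; since all the diffeologies in play are vector space diffeologies, a smooth linear bijection with smooth inverse is a diffeomorphism of diffeological vector spaces, which is exactly what is claimed.

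For the bijectivity part I would argue purely dimensionally, exploiting finite-dimensionality. The source $V^* \otimes W^*$ has (usual vector space) dimension $\dim(V^*)\cdot\dim(W^*)$, and the target $(V\otimes W)^*$ has dimension $\dim(V\otimes W) = \dim(V)\cdot\dim(W)$ — here I must be careful, because $\dim V^*$ (the diffeological dual) need not equal $\dim V$, as Example \ref{linear-nonsmooth-example} shows. However, one can check that for finite-dimensional diffeological vector spaces the diffeological dual of a tensor product has dimension $\dim(V^*)\cdot\dim(W^*)$: indeed a smooth linear functional on $V\otimes W$ corresponds, by the universal property quoted at the end of Section 1, to a smooth bilinear map $V\times W \to \matR$, and such a map is smooth iff it is "separately compatible" with the plots in each factor; picking bases one sees that the smooth bilinear functionals are exactly the span of $f_i\otimes g_j$ with $f_i$ ranging over a basis of $V^*$ and $g_j$ over a basis of $W^*$. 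That is precisely the image of $F$, so $F$ is onto, and being an injective linear map between spaces of equal finite dimension, it is a bijection. (Alternatively, and perhaps more cleanly, one shows directly that every smooth bilinear functional $V\times W\to\matR$ factors as a sum $\sum f_i(\cdot)g_i(\cdot)$ with $f_i\in V^*$, $g_i\in W^*$, by fixing one argument at a time and invoking smoothness of the resulting linear functionals — this simultaneously gives surjectivity of $F$ and avoids a separate dimension count.)

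For the smoothness of $F^{-1}$, I would take an arbitrary plot $q : U \to (V\otimes W)^*$ and show $F^{-1}\circ q$ is a plot of $V^*\otimes W^*$. By the functional diffeology criterion (Proposition \ref{criterio-funct-diff}), $q$ being a plot means the induced map $U\times(V\otimes W)\to\matR$ is smooth. Fix bases $\{v_i\}$ of $V$ and $\{w_j\}$ of $W$; then $q(u)$ is determined by the scalars $q(u)(v_i\otimes w_j)$, each of which depends smoothly on $u$ (compose $q$ with the constant plot at $v_i\otimes w_j$). One checks that $F^{-1}(q(u)) = \sum_{i,j} q(u)(v_i\otimes w_j)\, (v_i^*\otimes w_j^*)$ where $v_i^*,w_j^*$ are the dual-basis functionals — which lie in $V^*$, $W^*$ precisely because... and here is the subtle point: this requires that each $v_i^*$ actually be smooth on $V$. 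That need not hold for an arbitrary finite-dimensional diffeological vector space (again Example \ref{linear-nonsmooth-example}). The correct fix is to choose instead a basis of the diffeological dual $V^*$ itself, say $\{f_1,\dots,f_k\}$ (with $k = \dim V^*$), complete the dual-pairing appropriately, and express $F^{-1}(q(u))$ as a linear combination, with coefficients smooth in $u$, of the finitely many generators $f_a\otimes g_b$. Since $u\mapsto f_a\otimes g_b$ is a constant (hence smooth) plot and the coefficients are smooth $\matR$-valued functions, smoothness of addition and scalar multiplication in the diffeological vector space $V^*\otimes W^*$ finishes the argument.

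The step I expect to be the real obstacle is the surjectivity of $F$ onto $(V\otimes W)^*$ — equivalently, showing that every smooth linear functional on $V\otimes W$ arises from the diffeological duals $V^*$ and $W^*$ and nothing escapes. The worry is whether there could be a smooth functional on $V\otimes W$ that, under the bilinear correspondence, is not a finite sum of products $f(\cdot)g(\cdot)$ with $f\in V^*$, $g\in W^*$. Resolving this cleanly means analysing how the tensor product diffeology (a pushforward of a product diffeology) constrains smooth functionals; the key observation is that a smooth bilinear map $V\times W\to\matR$, restricted to $V\times\{w\}$ for each fixed $w$, gives a smooth linear functional on $V$ — hence lies in $V^*$ — and symmetrically, and then a standard finite-dimensional linear-algebra argument (fixing a basis of the image of $V$ under all elements of $V^*$) pins it down as an element of $V^*\otimes W^*$. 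Once that structural fact is in hand, everything else is the routine plot-chasing above.
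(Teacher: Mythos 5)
Your plan is correct, and it is worth recording where it differs from the paper's own argument. The paper also reduces the theorem to surjectivity: given a smooth linear $f:V\otimes W\to\matR$, it writes $F^{-1}(f)=\sum f_i\otimes g_i$ with $f_i$, $g_i$ in the \emph{usual} duals, notes that every plot of $V\otimes W$ locally factors as $\pi\circ(p_V,p_W)$, and concludes that the $f_i$, $g_i$ must be smooth ``by choosing the appropriate plots'' --- a step it does not spell out. Your version makes precisely that step explicit: pass via the universal property to the smooth bilinear map $V\times W\to\matR$, freeze one argument (a constant plot in one factor) to see that each slice lies in $V^*$, respectively $W^*$, and then do the finite-dimensional bookkeeping with vectors dual to a basis of $V^*$ (resp.\ $W^*$) to exhibit the bilinear map as a finite sum $\sum f_a(\cdot)g_b(\cdot)$; you are also right to discard both the naive dimension count and the naive dual basis $v_i^*$, which Example \ref{linear-nonsmooth-example} rules out. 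The second genuine difference is that you prove smoothness of $F^{-1}$ by an explicit plot-chase: for a plot $q:U\to(V\otimes W)^*$ the coefficients $c_{ab}(u)=q(u)(v_a\otimes w_b)$ are smooth by Proposition \ref{criterio-funct-diff}, and a finite linear combination of constant plots with smooth real coefficients is a plot because the diffeology of $V^*\otimes W^*$ is a vector space diffeology. The paper announces ``surjective with smooth inverse'' but its proof only carries out the surjectivity half, and the smoothness of the inverse is not automatic (a smooth linear bijection of diffeological vector spaces can fail to have a smooth inverse, e.g.\ the identity from a coarse to a fine structure), so this extra argument of yours is not redundant. In short: same key idea for surjectivity, but your plan supplies two pieces the paper leaves implicit.
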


We will give a proof of this statement later in Section 2.

\subsection{Smooth direct sum decompositions}

In general, if we have a finite-dimensional diffeological vector space $V$ that, as a usual vector space, decomposes into a direct sum $V=V_1\oplus V_2$, the
following two diffeologies are \emph{a priori} different: the given diffeology of $V$, and the vector space diffeology on $V_1\oplus V_2$ resulting from the
subset diffeologies on $V_1$ and $V_2$. A simple example (Example 3.5 of \cite{me2018}) can be used to illustrate that the two diffeologies are indeed generally
different.

\subsubsection{Smoothness of direct sums}

To illustrate the issue, let us first cite the aforementioned example.

\begin{example} \emph{(\cite{me2018})}
Let $V$ be $\mathbb{R}^3$ endowed with the vector space diffeology generated by the $p:\matR\to V$ defined by $p(x)=|x|(e_2+e_3)$, where $\{e_1,e_2,e_3\}$ is
the canonical basis of $\matR^3$. Let $V_1=\mbox{Span}(e_1,e_2)$ and $V_2=\mbox{Span}(e_3)$. The subset diffeology on any of these two subspaces is then the
standard diffeology of $\matR^2$ and $\matR$ respectively, therefore their direct sum is again a standard space, while $V$ is not so.
\end{example}

It thus makes sense to have a separate term for cases when the two diffeologies that \emph{a priori} are different, do coincide.

\begin{defn}
Let $V$ be a diffeological vector space, and let $V_1,\,V_2$ be two its subspaces such that $V$ is isomorphic as a vector space to the direct sum $V_1\oplus V_2$.
We say that the the decomposition $V=V_1\oplus V_2$ is \textbf{smooth} if the diffeology of $V$ coincides with the direct sum diffeology on $V_1\oplus V_2$
relative to the subset diffeologies on $V_1$ and $V_2$ (\emph{i.e.}, if at least one isomorphism is also a diffeomorphism). We also say that a (diffeological,
\emph{i.e.} considered with the subset diffeology) subspace $W\leqslant V$ \textbf{splits off as a smooth direct summand} if there exists another subspace
$Z\leqslant V$ which is a direct complement of $W$ in $V$, and the decomposition $V=W\oplus Z$ is smooth.
\end{defn}

While in a usual finite-dimensional vector space all subspaces admit direct complements, a diffeological vector space may contain subspaces that do not split
off as smooth summands (which justifies introducing a separate term for them).

\begin{example}\label{not-a-smooth-summand-ex}
Let $V$ be as in the previous example, \emph{i.e.}, $\matR^3$ with the vector space diffeology generated by the map $p:\matR\to V$ acting by $p(x)=|x|(e_2+e_3)$.
By definition of this diffeology, the local shapes of its plots are precisely maps of form,
$$U\ni u\mapsto\left(\alpha_1(u),\alpha_2(u)+\sum_{i=1}^kf_i(u)|F_i(u)|,\alpha_3(u)+\sum_{i=1}^kf_i(u)|F_i(u)|\right),$$
where $\alpha_1,\alpha_2,\alpha_3,f_i,F_i\in C^{\infty}(U,\matR)$ are ordinary smooth maps.

Let $W=\mbox{Span}(e_2)$; it is a standard space for the subset diffeology. Suppose it splits off as a smooth direct summand, and let $W_1$ be its smooth direct
complement. Then all plots of $V$ locally have form $U\ni u\mapsto (0,\alpha(u),0)+Q(u)$, where $\alpha\in C^{\infty}(U,\matR)$ and $Q:U\to W_1$ is a plot for
a subset diffeology of $W_1$. This in particular implies that any map of form
$$U\ni u\mapsto\left(\alpha_1(u),\sum_{i=1}^kf_i(u)|F_i(u)|,\alpha_3(u)+\sum_{i=1}^kf_i(u)|F_i(u)|\right)$$
for any choice of ordinary smooth maps $\alpha_1,\alpha_3,f_i,F_i$.

Let $k=1$, and let $U$ be any domain. Let first $F_1$ be the constant map with the zero value, $F_1\equiv 0$. Choosing first $\alpha_1\equiv 1$ and
$\alpha_3\equiv 0$, we obtain that $W_1$ contains the vector $(1,0,0)$; choosing $\alpha_1\equiv 0$ and $\alpha_3\equiv 1$, we conclude that $W_1$ also contains
$(0,0,1)$. Let now $F_1\equiv 1$, and assume the same for $f_1$, that is, $f_1\equiv 1$. Let $\alpha_1\equiv\alpha_3\equiv 0$. This means that $W_1$ contains
the vector $(0,1,1)$, and since it also contains $(0,0,1)$, it contains $(0,1,0)$ as well. Therefore $W_1$ coincides with the whole $V$, which is a
contradiction.
\end{example}

The simplicity of the above example, and the fact that the proof involved used only constant maps and (implicitly) the mere existence of a nonstandard plot,
but almost not at all its specific shape, suggests that such examples may be abundant among diffeological vector spaces that are not standard. This in turn
suggests the possibility that there may exist diffeological vector spaces that admit no non-trivial smooth decompositions at all. Although we do not have an
example of such at hand, in view of this \emph{a priori} possibility we introduce a specific term for such spaces, calling them \textbf{unsplittable} (o
\textbf{irreducible}) spaces. Such spaces have at least one common property in that they do not admit non-trivial pseudo-metrics, the diffeological counterparts
of scalar products (see the next section). Notice that this is not a characterizing property meaning that a nonsplittable space certainly does not admit
a pseudo-metric, but a space that does not admit one, may have smooth decompositions.

\begin{example}\label{splittable-space-without-pseudometric-ex}
Let $V$ be $\matR^2$ endowed with the vector space diffeology generated by the plots $p_i:\matR\to V$ for $i=1,2$, acting by $p_i(x)=|x|e_i$. It is quite easy
to see (in fact, obvious from construction) that the decomposition of $V$ into the direct sum of $\mbox{Span}(e_1)$ and $\mbox{Span}(e_2)$ is a smooth one. As 
easily follows from the results in Section 2, the only pseudo-metric on this space is the trivial one.
\end{example}

The overall conclusion is that in a finite.dimensional diffeological vector space splitting off of a given subspace as a smooth direct summand is not automatic
and usually needs to be imposed as an assumption. Some partial criteria do exist.

\begin{lemma} \emph{(Lemma 3.6, \cite{me2018})}
Let $V$ be a finite-dimensional diffeological vector space, let $V_1\leqslant V$ be a subspace such that its subset diffeology is standard, and let
$V=V_1\oplus V_2$ be its vector space decomposition into a direct sum for some subspace $V_2\leqslant V$. This decomposition is smooth if and only if there
exists a basis $\{v_1,\ldots,v_k\}$ of $V_1$ such that the projection on each vector $v_i$ is a smooth linear functional on $V$.
\end{lemma}

\subsubsection{Smooth direct sums and diffeological dual}

For usual vector spaces the dual of a subspace is canonically contained in the dual of the entire space, via the extension by zero to any direct complement.
For diffeological vector spaces this \emph{a priori} may not be true, since a function smooth on a subspace may fail to extend smoothly to the rest of the
space.

\begin{example}
Let $V$ and $W$ be as in Example \ref{not-a-smooth-summand-ex}. The obvious generator of the dual of $W$ extends, in the way just mentioned, to the function
$e^2$ defined on the whole $V$, which however is not smooth, since its composition with the plot $p$ is the absolute value function, $(e^2\circ p)(x)=|x|$.
\end{example}

Such a problem does not present itself for subspaces that split off smoothly.

\begin{lemma}\label{embedding-a-dual-lem}
Let $V$ be a finite-dimensional diffeological vector space, and let $W\leqslant V$ split off as a smooth direct summand. Then $W^*$ canonically embeds into
$V^*$.
\end{lemma}

\begin{proof}
Let $W'\leqslant V$ be a smooth direct complement of $W$, and let $w^*\in W^*$. It suffices to show that the linear function $\hat{w}^*$ defined by requiring it
to coincide with $w^*$ on $W$ and vanish on $W'$ is smooth. Indeed, let $p:U\to V$ be a plot of $V$; then locally it has form $p=q+q'$, where $q$ is a plot of
$W$ and $q'$ is one of $W'$. Then by construction $\hat{w}^*\circ p=w^*\circ q$, which is smooth by choice of $w^*$.
\end{proof}

We stress that due to this canonical embedding, $W^*$ can be identified with any subset of $V^*$ consisting of all the functions that vanish on some fixed
smooth direct complement of $W$ in $V$. Observe however that \emph{a priori} such identification may not be unique (so word canonically refers to the
procedure of constructing such embedding, not to the embedding itself), since $W$ may have more than one smooth complement; for instance, in the
Example \ref{more-than-one-splitting-ex} (see below) the subspace $\mbox{Span}(e_1)$ has both $\mbox{Span}(e_2)$ and $\mbox{Span}(e_1+e_2)$ as smooth direct
complements. The resulting embedding of $W^*$ into $V^*$ may therefore depend on the specific choice of the smooth direct complement of $W$ (although this
does not occur in the just-cited example, since the dual of $\mbox{Span}(e_1)$ is trivial).

\begin{cor}
Let $V$ be a finite-dimensional diffeological vector space, and let $V=V_1\oplus V_2$ be a smooth direct sum decomposition of $V$. Then
$V^*\cong V_1^*\times V_2^*$, and the diffeomorphism is obtained by assigning to each smooth linear function on $V$ its restrictions onto $V_1$ and $V_2$.
\end{cor}

\begin{proof}
The main ingredient in the proof is showing that the map thus defined is surjective, and this is ensured by Lemma \ref{embedding-a-dual-lem}.
\end{proof}

If we consider $V_1^*$ and $V_2^*$ as subspaces of $V^*$ via the above lemma, we can also say that $V^*$ splits as the direct sum of $V_1^*$ and $V_2^*$.

\subsubsection{Smooth direct sums and linear maps}

For usual finite-dimensional vector spaces there is a type of their standard representations as direct sums: any linear map defined on a given vector space
allows to represent it as the direct sum of the map's kernel and the map's image, although there are infinitely many ways to do that. On the other hand, for
diffeological spaces it should already be quite clear that no natural analogue of this statement would be true in general. Indeed, the most natural analogue
would be, that the domain space should be diffeomorphic to the direct sum of the map's kernel and the map's image considered with the subset diffeology relative
to the diffeology on the target space. However, we have just seen (Example \ref{not-a-smooth-summand-ex}) that some subspaces do not split off as smooth
direct summands, while for any given subspace it is quite easy to construct a smooth linear map whose kernel is precisely that subspace.

\begin{example}
Let $V$ be the same as in Example \ref{not-a-smooth-summand-ex}, and let $W$ be $\matR^2$ endowed with the diffeology that is the pushforward of the diffeology
of $V$ by the linear map $f:V\to W$ such that $f(e_1)=e_1$, $f(e_2)=0$, $f(e_3)=e_2$ (here $e_1,e_2,e_3$ on the left form the canonical basis of $V$, and
$e_1,e_2$ on the right form the canonical basis of $W$). The map $f$ is smooth and linear by construction, and its kernel is $\mbox{Span}(e_2)$ --- a subspace
that, as we have previously shown, does not split off as a smooth direct summand.
\end{example}

Thus, in order to have some hope to obtain a diffeomorphism with $\mbox{Ker}(f)\oplus\mbox{Im}(f)$ for a given $f:V\to W$ (where $V,W$ are diffeological vector
spaces and $f$ is a smooth linear map) we must at least ensure that $\mbox{Ker}(f)$ splits off as a smooth direct summand. However, in general this is not
sufficient, due to the issue with the diffeology on $\mbox{Im}(f)$. The issue basically is that the existence of a diffeomorphism between the domain space
and $\mbox{Ker}(f)\oplus\mbox{Im}(f)$ would at least imply that the natural diffeology on $\mbox{Im}(f)$ (the subset diffeology relative to that on the target
space) must coincide with the pushforward of the diffeology on the domain space. However, the subset diffeology on $\mbox{Im}(f)$ might easily be larger; for a
simple example, it suffices for $V$ to be a standard space of dimension at least $2$, for $W$ to be a coarse space of dimension at least $1$, and for $f$ to any
non-zero linear map with a non-trivial kernel.

The issue of the diffeology of the target space being arbitrarily large must be pretty much excluded by assumption, just as the possibility of the kernel not
splitting off as a smooth direct summand needs to be. However, with assumptions of this kind in place, the two issues are to some extent interrelated. In
particular, the following is true.

\begin{lemma} \emph{(Lemma 4.5 of \cite{me2019})}
Let $V$ be a finite-dimensional diffeological vector space, let $W$ be a vector space, and let $f:V\to W$ be a surjective linear map. Let $V'\leqslant V$ be a
subspace of $V$, and let $W'=f(V')$ be the corresponding subspace of $W$. Endow $W$ with the pushforward of the diffeology of $V$ by the map $f$; then the
corresponding subset diffeology on $W'$ is in general coarser than the pushforward of the subset diffeology on $V'$ by the map $f|_{V'}$. If $\mbox{Ker}(f)$
splits off as a smooth direct summand then the two diffeologies coincide.
\end{lemma}

Note that the two diffeologies on an image may happen to coincide also without the kernel splitting off as a smooth summand, so the relation is one-way only.
However, in view of all of the above, this appears the best that one can obtain.

Finally, observe that under the appropriate assumptions smooth direct sums are well-behaved with respect to smooth linear maps. More precisely, the following
is true,

\begin{lemma} \emph{(Lemma 4.7 of \cite{me2019})}
Let $V$ be a finite-dimensional diffeological vector space, let $W$ be a vector space, and let $f:V\to W$ be a surjective linear map; endow $W$ with the
pushforward diffeology. Let $V=V_0\oplus V_1$ be a smooth decomposition of $V$ into a direct sum, and suppose that $f(V_0)\cap f(V_1)=\{0\}$. Then the
decomposition $W=W_0\oplus W_1$, where $W_i=f(V_i)$ for $i=0,1$, is smooth.
\end{lemma}

The assumption that the diffeology of $W$ is the pushforward diffeology, is essential; making it larger would keep $f$ smooth (which it would not be for a
smaller one) but might prevent the conclusion from being true.

\section{Diffeological pseudo-metrics on vector spaces}

For a finite-dimensional diffeological vector space, the analogue of a scalar product is (probably) the most obvious one: it is a minimally degenerate smooth
bilinear form. This choice of the corresponding notion of a metric is pretty much a forced one.

\subsection{What is a pseudo-metric}

A pseudo-metric is the closest analogue of a smooth scalar product that can exist on finite-dimensional diffeological vector space.

\begin{defn}
Let $V$ be a diffeological vector space of finite dimension $n$. A \textbf{diffeological pseudo-metric} on $V$ is a semi-positive definite smooth symmetric
bilinear form on $V$ such that its maximal isotropic subspace has dimension $n-\dim(V^*)$, where $V^*$ is the diffeological dual of $V$.
\end{defn}

If $V$ has a fixed identification with $\matR^n$, any symmetric bilinear form is represented by a symmetric $n\times n$ matrix $A$. Assuming the form is smooth,
the matrix $A$ defines a pseudo-metric if and only if the multiplicity of its zero eigenvalue is equal to $n-\dim(V^*)$, and all the other eigenvalues are
positive. Notice that, as was shown in Lemma 3.2 of \cite{me2018}, for an arbitrary smooth symmetric bilinear form on $V$ the multiplicity of the eigenvalue $0$
is always at least $n-\dim(V^*)$, so a pseudo-metric is a form that achieves the minimal value of the multiplicity.

At least one pseudo-metric always exists on any finite-dimensional diffeological space $V$ (in the most degenerate case, \emph{i.e.}, when the dual space is
trivial, there is precisely one pseudo-metric, which is trivial). Indeed, let $f^1,\ldots,f^k$ be any basis of $V^*$; then $\sum_{i=1}^kf^i\otimes f^i$ is
obviously a pseudo-metric on $V$.

\begin{example}\label{all-pseudo-metrics-ex}
Let $V$ be $\matR^2$ endowed with the vector space diffeology generated by the plot $p:\matR\to V$ acting by $p(x)=|x|e_1$. Let us show that any pseudo-metric
on $V$ is given by a positive multiple of the matrix $\left(\begin{array}{cc} 0 & 0 \\ 0 & 1 \end{array}\right)$.

Consider a generic symmetric $2\times 2$ matrix $A$, which has form $\left(\begin{array}{cc} a & b \\ b & c \end{array}\right)$; assume that not all of $a,b,c$ are
zero. For this matrix to define a pseudo-metric on $V$, it is necessary and sufficient that the evaluation of the corresponding bilinear form on any given pair
of plots be an ordinary smooth function to $\matR$. Evaluting it on $p$ and the constant plot with value $e_1$ yields the function $x\mapsto a|x|$, which
implies that $a=0$. Likewise, the evaluation on $p$ and the constant plot with value $e_2$ allows to conclude that $b=0$. Finally, evaluating it on a generic
pair of plots implies that $c$ can be any positive number. Indeed, a generic plot $Q$ of $V$ has form
$$U\ni u\mapsto(\alpha_1(u)+\sum_if_i(u)|F_i(u)|,\alpha_2(u)),$$
so evaluating the bilinear form associated to $A$ on a pair of such plots yields a function of form $u\mapsto c\alpha_2^{(1)}(u)\alpha_2^{(2)}(u)$, where
$\alpha_2^{(1)},\alpha_2^{(2)}$ are the compositions of the two plots with the projection onto the second coordinate. Such function is necessarily smooth
for any $c$. Finally, evaluating the form on the pair both of whose components are constant plots with value $e_2$, implies $c$ can, and must, be any positive
number.
\end{example}

\subsection{Pseudo-metrics and the diffeological dual}

Similarly to the standard vector spaces and scalar products on them, a pseudo-metric on a diffeological vector space induces a natural surjective map onto the
diffeological dual, which turns out to be smooth, via the assignment to each $v\in V$ of the smooth linear functional $g(\cdot,v)$. Except for the standard case,
it is not a diffeomorphim; however, it does admit left inverses.

\begin{thm}\label{map-onto-dual-thm}
Let $V$ be a finite-dimensional diffeological vector space, and let $g$ be a pseudo-metric on $V$. The induced map $\Psi:V\to V^*$ (\emph{i.e.}, the map given
by $v\mapsto[w\mapsto g(w,v)]$) is a smooth linear surjective map which admits a smooth left inverse. In particular, the restriction of $\Psi$ onto the image
of this inverse is a diffeomorphism onto $V^*$.
\end{thm}

\begin{proof}
Choose a basis $u_1,\ldots,u_n$ of $V$, and consider the (usual vector space) isomorphism $\varphi:V\to\matR^n$ corresponding to this basis. Endow this copy of
$\matR^n$ with the diffeology that is the pushforward of the diffeology of $V$ by $\varphi$. Let $V_{coor}$ be the diffeological vector space thus obtained. By
construction, $\varphi$ is then a diffeomorphism $V\to V_{coor}$. In particular, $g$ induces via $\varphi$ a pseudo-metric on $V_{coor}$ given by a symmetric
square $n\times n$ matric $A$.

Let $v_1,\ldots,v_n$ be an orthonormal basis of eigenvectors of $A$, in which the first $k$ vectors belong to positive eigenvalues and the rest belong to the
eigenvalue $0$. Let $\lambda_1,\ldots,\lambda_k$, taken with multiplicity, denote all the positive eigenvalues of $A$, \emph{i.e.} $Av_i=\lambda_iv_i$ for
$i=1,\ldots,k$, and let $V_0$ be the subspace generated by $v_1,\ldots,v_k$.

We claim that the restriction of $\Psi$ onto $\varphi^{-1}(V_0)$ is a diffeomorphism. To prove this, it is of course sufficient to prove the analogous statement
for $V_0\leqslant V_{coor}$ and the map $\Psi_A$ induced by the natural pairing determined by the matrix $A$. This was actually proved in Theorem 3.11 of
\cite{me2018}, but since that proof used an erroneous statement (easily avoidable and of which we provide details later in this paper), we reprove it here.

Observe first of all that the subset diffeology of $V_0$ is standard. Indeed, let $p:U\to V$ be a plot of $V$ such that its image is contained in $V_0$. Let us
write $p(u)=p_1(u)v_1+\ldots+p_k(u)v_k$, where $p_i$ is obviously the composition of $p$ with the projection (in the usual sense) on $v_i$. Recall now that the
map $u\mapsto\langle p(u)|v_i\rangle_A$ must be smooth in the ordinary sense by definition of a pseudo-metric. However, for all $i=1,\ldots,k$ this map is
actually $u\mapsto\lambda_ip_i(u)$ with $\lambda_i\neq 0$ by choice of $v_1,\ldots,v_k$. Hence all maps $p_i$ are ordinary smooth maps, and therefore so is $p$.
This means precisely that the subset diffeology of $V_0$,

We claim furthermore that $V_0$ splits off as a smooth direct summand, and for any such splitting the diffeological dual of the other summand is trivial. We
first show that the decomposition of $V_{coor}$ into the direct sum of $V_0$ and $V_1$, the subspace generated by the eigenvectors of $A$ belonging to the
eigenvalue $0$, is smooth.

Let $p:U\to V_{coor}$ be any plot of $V_{coor}$; write $p(u)=p_1(u)v_1+\ldots+p_n(u)v_n$, where $p_i:U\to\matR$ is the composition of $p$ with the projection
onto $v_i$ (that is, onto $\mbox{Span}(v_i)$). Notice that $p$ is thus the sum $p=p_0+q$, where $p_0$ and $q$ are given by $p_0(u)=p_1(u)v_1+\ldots+p_n(u)v_k$
and $q(u)=p_{k+1}(u)v_{k+1}+\ldots+p_n(u)v_n$ and take values in $V_0$ and $V_1$ respectively. It thus suffices to show that $p_0$ is a plot of $V_0$ to be
sure that $q=p-p_0$ is one of $V_1$ and so to obtain the desired conclusion. But since $V_0$ has standard diffeology, $p_0$ being a plot of it is equivalent
to all $p_1,\ldots,p_k$ being ordinary smooth functions. To conclude, observe as before that $u\mapsto\langle p(u)|v_i\rangle_A$ is ordinary smooth by
construction of $A$ and that we still have $\langle p(u)|v_i\rangle_A=\lambda_ip_i(u)$, so the decomposition $V_{coor}=V_0\oplus V_1$ is indeed smooth. (A similar
reasoning appears in Theorem 3.6 of \cite{wu}, its corollary Lemma 3.6, as well as Lemma 3.7, of \cite{me2018}).

Let now $V_{coor}=V_0\oplus W_1$ be any smooth decomposition of $V_{coor}$; let us show that $W_1^*$ is trivial. Assume that it is not, and let $w^*\in W_1^*$
be non-trivial. Since $V_0$ has standard diffeology, its diffeological dual coincides with the usual dual and has dimension equal to $k=\dim(V_{coor}^*)$. Choose
a basis $v_1^*,\ldots,v_k^*$ of $V_0^*$ and consider the corresponding elements $v^1,\ldots,v^k$ of $V_{coor}^*$ obtained by defining them to act as
$v_1^*,\ldots,v_k^*$ respectively on $V_0$ and to vanish on $W_1$; let also $w^1$ act as $w^*$ on $W_1$ and vanish on $V_0$. All of the functions
$v^1,\ldots,v^k,w^1$ are smooth because the decomposition $V_{coor}=V_0\oplus W_1$ is smooth, and are linear and linearly independent by construction. Therefore
the symmetric bilinear form $v^1\otimes v^1+\ldots+v^k\otimes v^k+w^1\otimes w^1$ is smooth and semipositive definite, and has rank $k+1>\dim(V_{coor}^*)$, which
contradicts Lemma 3.2 of \cite{me2018}.

Notice finally that $V_0$ is a maximal subspace such that its subset diffeology is standard and it splits off as a smooth direct summand. Observe more in
general that if $V_{coor}=V_1\oplus W_1$ is a smooth decomposition such that the subset diffeology of $V_1$ is standard, there exists a smooth symmetric bilinear
form on $V_{coor}$ of rank equal to $\dim(V_1)$. Indeed, to define such form it suffices to pose it to be a scalar product on $V_1$, to vanish on $W_1$, and
require $V_1$ and $W_1$ to be orthogonal. So if now we assume that $V_0$ is contained in a larger subspace $V_1'$ that has standard diffeology and splits off as
a smooth direct summand, we can construct a smooth symmetric bilinear form of rank strictly greater than $\dim(V_0)=\dim(V_{coor}^*)$, which is again a
contradiction. (This proof was given in Proposition 3.8 of \cite{me2018}).

It is easy to conclude from the above paragraph that the diffeological dual of $V_{coor}$ is diffeomorphic to $V_0$. Indeed, let $V_{coor}=V_0\oplus V_1$ be a
smooth direct sum decomposition for some subspace $V_1$. Then
$$V_{coor}^*=(V_0\oplus V_1)^*\cong V_0^*\times V_1^*\cong V_0^*\cong V_0,$$
where the last diffeomorphism is due to $V_0$ being standard.

It remains to show that such a diffeomorphism can be given by the restriction of $\Psi_A$ onto $V_0$. By construction, this restriction is an isomorphism;
observe in particular that by definition of a pseudo-metric $V_0$ and $V^*$ have the same dimension. It is thus sufficient to prove that it is both ways
smooth.

Let $v^i=\Psi_A(v_i)$ for $i=1,\ldots,k$, where $v_1,\ldots,v_k$ is the already used orthonormal basis of $V_0$ consisting of certain eigenvectors of $A$, and
let us write $\Psi_A^{-1}$ for the linear map $V^*\to V_0$ given by $\Psi_A^{-1}(v^i)=v_i$ for $i=1,\ldots,k$ (that is, $\Psi_A^{-1}=(\Psi_A|_{V_0})^{-1}$).
Recall that a generic plot $p$ of $V_0$ locally has form $p(u)=p_1(u)v_1+\ldots+p_k(u)v_k$ for some ordinary smooth functions $p_1,\ldots,p_k$. Since
$\Psi_A\circ p$ acts by $(\Psi_A\circ p)(u)=p_1(u)v^1+\ldots+p_k(u)v^k$, it is a plot of $V^*$ (since $p_1,\ldots,p_k$ are smooth, this kind of statement would
be true for any vector space with a vector space diffeology). And since $V^*$ also has standard diffeology, its generic plot $q$ also has form
$u\mapsto q_1(u)v^1+\ldots+q_k(u)v^k$ for some smooth functions $q_1,\ldots,q_k$, with the composition $\Psi_A^{-1}\circ q$ acting by
$(\Psi_A^{-1}\circ q)(u)=q_1(u)v_1+\ldots+q_k(u)v_k$, so a plot of $V_0$, and the theorem is proven.
\end{proof}

The construction in the above proof depends \emph{a priori} on the choice of the auxiliary basis $u_1,\ldots,u_n$; a different choice of the basis may produce
a different inverse.

\begin{example}\label{characteristic-not-unique-ex}
Consider $V$ as in the Example \ref{all-pseudo-metrics-ex}, and consider the pseudo-metric $g$ on it given by the matrix
$\left(\begin{array}{cc} 0 & 0 \\ 0 & 1 \end{array}\right)$. Choose $u_1=e_1$, $u_2=e_1+e_2$ as a basis (we will show in a subsequent example that it is actually
a smooth basis, \emph{i.e.} the decomposition of $V$ into the direct sum of the spans of its components is smooth). The matrix of $g$ written with respect to
this basis is again $\left(\begin{array}{cc} 0 & 0 \\ 0 & 1 \end{array}\right)$, but now the unitary eigenvector $\left(\begin{array}{c} 0\\ 1\end{array}\right)$
belonging to its positive eigenvalue consists of the coordinates of a generator $v$ of $V_0$, the target space of the left inverse of the induced map $\Psi$, in
the basis $u_1,u_2$; therefore this generator is actually $v=e_1+e_2$, so $V_0=\mbox{Span}(e_1+e_2)$. But if we were to use the canonical basis of $\matR^2$ to
determine it, we would have obtained $V_0=\mbox{Span}(e_2)$.
\end{example}

The choice of the left inverse of the map induced by natural pairing depends therefore on the choice of the basis of $V$, that is, on its identification with
some Euclidean space.

\begin{cor}\label{unique-inverse-of-psi-cor}
Let $V$ be a diffeological vector space whose underlying vector space is $\matR^n$ for some $n$. Then there is a canonical choice $\Psi_0^{-1}$ of a left inverse
of $\Psi$.
\end{cor}

\begin{proof}
A pseudo-metric on $\matR^n$ (considered as a diffeological space) is given by a well-defined matrix $A$, whose set of eigenvectors is uniquely defined. This
implies that the subspace $V_0$ generated by the eigenvectors belonging to the positive eigenvalues is uniquely defined, and therefore so is the left inverse
of the induced map $\Psi$ constructed above.
\end{proof}

Finally, the map $\Psi$ induced by the natural pairing allows to carry over to $V^*$ the initial pseudo-metric, which becomes a true scalar product on $V^*$
(see Corollary 3.13 of \cite{me2018}). Furthermore, $\Psi$ allows to easily obtain the following result.

\begin{cor} \label{max-dim-of-standard-subspace-cor}
Let $V$ be a finite-dimensional diffeological vector space, and let $V'$ be any its subspace whose subset diffeology is standard. Then
$\dim(V')\leqslant\dim(V^*)$.
\end{cor}

\begin{proof}
Endow $V$ with any pseudo-metric and consider the corresponding map $\Psi$ induced by natural pairing. It suffices to observe that the restriction of $\Psi$
to $V'$ is injective.
\end{proof}

\subsection{The maximal isotropic subspace relative to a pseudo-metric}

As a generally degenerate symmetric bilinear form, a pseudo-metric determines its maximal isotropic subspace.

\begin{lemma}\label{maximal-isotropic-subspace-lem}
Let $V$ be a finite-dimensional diffeological vector space, let $g$ be a pseudo-metric on $V$, and let $V_1$ be the maximal isotropic subspace of $g$. Then
$V_1$ splits off smoothly.
\end{lemma}

\begin{proof}
Let $\Psi:V\to V^*$ be the natural pairing map induced by $g$ (\emph{i.e.}, acting by $v\mapsto[w\mapsto g(w,v)]$), and let $\hat{\Psi}_0^{-1}$ be any choice of
its left inverse. Denote $V_0:=\hat{\Psi}_0^{-1}(V^*)$, the pre-image of $V^*$. We claim that $V$ splits as a smooth direct sum of $V_0$ and $V_1$.

Since $V_1$ is precisely the kernel of the linear map $\Psi$, and $V_0$ is diffeomorphic to $\mbox{Im}(\Psi)$ via its left inverse $\hat{\Psi}_0^{-1}$, the vector
space underlying $V$ obviously coincides the direct sum of its vector space subspaces $V_0$ and $V_1$. What we really need to show is that the corresponding
direct sum decomposition $V_0\oplus V_1$ is smooth; for that, it suffices to show that every plot $p$ of $V$ is locally the sum of some plot $p_0$ of $V_0$ and
some plot $q$ of $V_1$.

Define $p_0:=\hat{\Psi}_0^{-1}\circ\Psi\circ p$; since $\hat{\Psi}_0^{-1}$ and $\Psi$ are smooth and $p$ is a plot, $p_0$ is again a plot of $V$. Furthermore,
since its image is contained in $V_0$ by construction, it is a plot for the subspace diffeology on $V_0$. It remains to define $q$ to be $q=p-p_0$; this is a
plot of $V$, since the operations are smooth in a vector space diffeology, and its image is obviously contained in $V_1$, whence the claim.
\end{proof}

As follows from the discussion in the previous section, the decomposition of $V$ thus obtained is \emph{a priori} not unique. However, if the underlying vector
space of $V$ is $\matR^n$, by taking $\hat{\Psi}_0^{-1}$ to be $\Psi_0^{-1}$, the canonically defined left inverse of Corollary \ref{unique-inverse-of-psi-cor},
we obtain a well-defined canonical decomposition.

\subsection{Characteristic subspaces}

Let $V$ be a finite-dimensional diffeological vector space endowed with a pseudo-metric $g$ such that its underlying vector space is $\matR^n$ for some $n$.
We define its \textbf{characteristic subspace} to be the pre-image $\Psi_0^{-1}(V^*)$ of the diffeological dual under the map $V\to V^*$ induced by the natural
pairing. By definition, the dimension of the characteristic subspace is equal to that of the diffeological dual $V^*$ of $V$. Obviously, for a standard space any
its characteristic subspace is the space itself. On the other extreme, it may also happen that any characteristic subspace is trivial; this is the case of
Example \ref{splittable-space-without-pseudometric-ex}. Furthermore, we have the following.

\begin{prop} \emph{(Proposition 3.8, Theorem 3.12 of \cite{me2018})}
Let $V$ be a finite-dimensional diffeological vector space endowed with a pseudo-metric $g$. The characteristic subspace of $V$ is a maximal subspace of $V$ that
is standard and splits off as a smooth direct summand.
\end{prop}

Recall that, if $A$ is the matrix that defines the pseudo-metric $g$, the characteristic subspace is the subspace generated by all the eigenvectors of $A$
belonging to its positive eigenvalues. It would be nice if the characteristic subspace of $V$ could also be independently determined by the property of being a
maximal standard subspace that splits off smoothly, but (contrary to what we originally thought\footnote{The proof of Proposition 3.10 of \cite{me2018} contains
a mistaken claim in asserting that for $w_0=v_0+v_1$ with $w_0$ and $v_0$ such that the projections on them are smooth linear functionals, so is the projection
on $v_1$. There is of course no reason to claim this, and the specific instance of where this is not true, is given by the Example
\ref{more-than-one-splitting-ex}, for $w_0=e_1+e_2$, $v_0=e_2$, and $v_1=e_1$.}) this is not the case.

\begin{example}\label{more-than-one-splitting-ex}
Consider again $V$ of Example \ref{all-pseudo-metrics-ex}, \emph{i.e.} $\matR^2$ endowed with the vector space diffeology generated by the plot $p:\matR\to V$
acting by $p(x)=|x|e_1$. We claim the following: 1) the diffeological dual of $V$ has dimension 1; 2) the subspace $V_0:=\mbox{Span}(e_2)$ is the characteristic
subspace of $V$ for the pseudo-metric $g$ given by the matrix $\left(\begin{array}{cc} 0 & 0 \\ 0 & 1 \end{array}\right)$; 3) the subset diffeology on the
subspace $V_1:=\mbox{Span}(e_1+e_2)$ is standard and splits off smoothly. Let us prove these claims.

To prove 1), it suffices to show that the diffeological dual $V^*$ is generated by the linear map $e^2$ (belonging to the canonical dual basis of $\matR^2$).
Indeed, recall that a generic plot $Q$ of $V$ has form
$$U\ni u\mapsto(\alpha_1(u)+\sum_if_i(u)|F_i(u)|,\alpha_2(u)),$$
where $U$ is a domain in some Euclidean space and $\alpha_1,\alpha_2,f_i,F_i\in C^{\infty}(U,\matR)$ are any. On the other hand, as a set, the diffeological dual
$V^*$ is, of course, a subset of the ordinary dual, so its elements have form $ae^1+be^2$; it suffices to show that $ae^1+be^2$ belongs to $V^*$ if and only if
$a=0$. By definition of the diffeological dual, $ae^1+be^2\in V^*$ is equivalent to $(ae^1+be^2)\circ Q$ being an ordinary smooth map $U\to\matR$. It suffices
to take $Q$ to be $p$, the generating plot; we have $(ae^1+be^2)\circ p$ is the map $x\mapsto a|x|$, which is smooth for $a=0$ only. On the other hand, for a
generic plot $Q$ of the above form, the composition $e^2\circ Q$ is just the function $\alpha_2$, smooth by assumption. Therefore $V^*$ coincides with
$\mbox{Span}(e^2)$ and in particular, it has dimension 1.

To prove 2), we need to show that the subset diffeology of $V_0$ is standard, and that it splits off as a smooth direct summand (it is then automatically maximal
for these properties, since its dimension is just one less than that of $V$, which itself, not being standard, cannot be its own characteristic subspace). Both
of these claims readily follow from the general form of a plot of $V$ shown above. Indeed, $Q$ is a plot of the subset diffeology of $V_0$ if and only if it has
form
$$u\mapsto(0,\alpha_2(u));$$
furthermore every $Q$ writes as a sum of plots
$$u\mapsto(\alpha_1(u)+\sum_if_i(u)|F_i(u)|,0)\,\,\mbox{ and }\,\,u\mapsto(0,\alpha_2(u)),$$
which is sufficient to show that the usual direct sum decomposition of $V$ into $\mbox{Span}(e_1)\oplus\mbox{Span}(e_2)$ is a smooth one.

Finally, let us show that the subset diffeology of $V_1$ is standard as well. It suffices to observe that a plot $Q$ of the form shown above belongs to the
subset diffeology of $V_1$ if and only if we have
$$\alpha_1(u)+\sum_if_i(u)|F_i(u)|=\alpha_2(u)\,\,\mbox{ for all }u\in U.$$
In particular, it is an ordinary smooth map.

The subspace $V_1$ is therefore a maximal standard subspace, since it has dimension just one less than that of the entire $V$, which is not standard. We now
observe that the direct sum decomposition $V=V_1\oplus\mbox{Span}(e_1)$ is smooth. This easily follows from the fact that a generic plot $Q$ of $V$ of the above
shape writes as the sum of plots
$$u\mapsto(\alpha_2(u),\alpha_2(u))\,\,\mbox{ and }\,\,u\mapsto(\alpha_1(u)-\alpha_2(u)+\sum_if_i(u)|F_i(u)|,0),$$
of which the former is a plot of $V_1$ and the latter is a plot of $\mbox{Span}(e_1)$.

The subspace $V_1$ is therefore also a maximal standard subspace that splits off smoothly. However, it is not the characteristic subspace of any pseudo-metric
on $V$. Indeed, we have shown in a previous example that any pseudo-metric on $V$ is a positive multiple of $g$, so all of them have the same characteristic
subspace, which is $V_0$.
\end{example}

\subsection{Proof of Theorem \ref{tensor-dual:dual-tensor:thm}}

Given two finite-dimensional diffeological vector spaces $V$ and $W$, fix their decompositions into smooth direct sum of a characteristic subspace and the
maximal isotropic subspace, $V=V_0\oplus V_1$ and $W=W_0\oplus W_1$. Choose bases $v_1,\ldots,v_k$ and $w_1,\ldots,w_l$ of $V_0$ and $W_0$ respectively, and
complete them to bases $v_1,\ldots,v_n$ and $w_1,\ldots,w_m$ of the entire $V$ and $W$.

Consider the corresponding basis $v_i\otimes w_j$ with $i=1,\ldots,n$ and $j=1,\ldots,m$ of $V\otimes W$. Let $v^i$ and $w^j$ be the usual dual bases of the
\emph{usual} duals of $V$ and $W$, \emph{i.e.}, $v^i(v_s)=\delta_{is}$ and $w^j(w_t)=\delta_{js}$; by the standard isomorphism, the collection of all
$v^i\otimes w^j$ forms the basis of the usual dual $(V\otimes W)^*$, of which the diffeological dual is a subset. Observe that by construction $v^1,\ldots,v^k$
is a basis of the diffeological dual $V^*$ and $w^1,\ldots,w^l$ is one of the diffeological dual $W^*$. It thus suffices to prove that $v^i\otimes w^j$ for
$i=1,\ldots,k$ and $j=1,\ldots,l$ form a basis of $(V\otimes W)^*$.

Let $f$ be a generic element of $(V\otimes W)^*$. Since it is also an element of the usual dual of $V\otimes W$, it writes (uniquely) as
$f=\sum_{i,j}a_{ij}v^i\otimes w^j$. It suffices to prove that $a_{ij}=0$ as soon as either $i>k$ or $j>l$ to obtain the claim of the theorem.

Suppose that $i>k$ and $j$ is any, and consider $\mbox{Span}(v_i)$. Since $v_i\in V_1$, the maximal isotropic subspace, the subset diffeology of
$\mbox{Span}(v_i)$ is non-standard. Since it is however a vector space diffeology, there exists at least one non-constant plot $p:U\to\mbox{Span}(v_i)$ of form
$p(u)=p_i(u)v_i$, where $p_i:U\to\matR$ is \emph{not} a smooth function. Let $q_j:U\to\{w_j\}$ be a constant plot of $W$ with value $w_j$, and $p\otimes q_j$
be the plot of $V\otimes W$ corresponding to the pair $p,q_j$ in the tensor product diffeology. Then the composition $f\circ(p\otimes q_j)$ coincides with
the function $a_{ij}p_i$, and since $f$ must be smooth, $a_{ij}$ must be zero. This is sufficient to establish the statement of the Theorem.\finedimo

\subsection{Independence of pseudo-metric}

We have shown above that every finite-dimensional diffeological vector space $V$ endowed with a pseudo-metric splits into a smooth direct sum of its maximal
isotropic subspace and its characteristic subspace. This splitting depends of course on the specific identification of the underlying vector space with $\matR^n$
for appropriate $n$ (so on the choice of the basis), since the characteristic subspace does (see Example \ref{characteristic-not-unique-ex}). However, we now show
that it does not depend on the choice of the pseudo-metric.

\begin{lemma}\label{maximal-isotropic-is-unique-lem}
The maximal isotropic subspace is an invariant with respect to the choice of a pseudo-metric on $V$.
\end{lemma}

\begin{proof}
We claim that the maximal isotropic subspace of any pseudo-metric is precisely the intersection of the kernels of all smooth linear functions on $V$ (\emph{i.e.},
it is the maximal subspace such that any smooth linear function vanishes on it). This follows from Theorem \ref{map-onto-dual-thm}. Obviously, such
characterization is wholly independent of the concept itself of a pseudo-metric.
\end{proof}

Let now the underlying vector space of $V$ be $\matR^n$.

\begin{thm}\label{characteristic-is-well-defined-thm}
The characteristic subspace of $V$ does not depend on the choice of a pseudo-metric.
\end{thm}

\begin{proof}
This follows immediately from Lemma \ref{maximal-isotropic-is-unique-lem}. Indeed, since eigenvectors belonging to different eigenvalues are always orthogonal
(with respect to the canonical scalar product on $\matR^n$), the characteristic subspace corresponding to any given pseudo-metric coincides with the usual
orthogonal complement of the maximal isotropic subspace. Since the latter is the same for all pseudo-metrics, so is the former.
\end{proof}

Thus, any identification of the underlying vector space $|V|$ of a given finite-dimensional diffeological vector space $V$ with the appropriate Euclidean
space ($\matR^n$ endowed with its canonical scalar product) determines the decomposition of $V$ into the smooth direct sum of its maximal isotropic subspace
(its maximally non-standard part, as one could say) and its characteristic subspace (its maximal standard part).\footnote{In particular, the statement of
Proposition 3.10 in \cite{me2018} is actually true, although the proof as it appears therein, was incorrect. Now, mistakenly conceiving the characteristic
subspace as \emph{the} unique maximal subspace that is standard and splits off smoothly, was used in \cite{me2017b} to define the so-called characteristic
sub-bundle. In reality, the existence of such, not always guaranteed, as a smooth direct complement of \emph{the maximal isotropic sub-bundle} (which on the
other hand is always well-defined) must be imposed as a matter of assumption, together with the requirement that the gluing maps preserve them. Under such
assumption the results obtained in \cite{me2017b} concerning them still hold. A separate paper, in preparation, deals with these issues.}

Finally, in those cases when such dependence of the characteristic subspace on the choice of a coordinate system might be an obstacle, an alternative notion
can be used.

\begin{defn}
Let $V$ be a finite-dimensional diffeological vector space, and let $V_1$ be its maximal isotropic subspace. The \textbf{characteristic quotient} of $V$ is the
quotient vector space $V/V_1$ endowed with the quotient diffeology.
\end{defn}

Since the maximal isotropic subspace is uniquely determined by the vector space itself, the characteristic quotient is well-defined (and unlike the
characteristic subspace, it does not depend on introducing coordinates on $V$). We also have the following statement.

\begin{lemma}\label{characteristic-quotient-lem}
The characteristic quotient $V/V_1$ is diffeomorphic to the diffeological dual $V^*$ (and so to any characteristic subspace of $V$), and $V$ is diffeomorphic
to the direct sum $V/V_1\oplus V_1$. Furthermore, any pseudo-metric of $V$ descends to a well-defined scalar product on $V/V_1$.
\end{lemma}

\begin{proof}
This is a consequence of the existence of decompositions of $V$ into a smooth direct sum of its characteristic subspace and the maximal isotropic subspace,
and of Theorem \ref{map-onto-dual-thm}.
\end{proof}

Do notice that there is not a canonical diffeomorphism of $V$ with $V/V_1\oplus V_1$, for the same reasons for which in general there is not a canonical choice
of a characteristic subspace. Furthermore, characteristic quotients possess the following important property, whose analogue, as we will see in the next section,
does not hold for characteristic subspaces.

\begin{prop}
Let $V$ and $W$ be finite-dimensional diffeological vector spaces, and let $f:V\to W$ be a smooth linear map. Then $f$ descends to a well-defined linear (and
necessarily smooth) map $f_/:V/V_1\to W/W_1$ between their characteristic quotients.
\end{prop}

\begin{proof}
It is sufficient that the image $f(V_1)$ of the maximal isotropic subspace $V_1$ of $V$ be contained in the maximal isotropic subspace $W_1$ of $W$. Recall that
the maximal isotropic subspace can be described as the intersection of kernels of all smooth functions $W\to\matR$. Let $w\in f(V_1)$, so that $w=f(v)$ for some
$v\in V_1$, and let $h\in C^{\infty}(W,\matR)$. Then $h(w)=(h\circ f)(v)$, and $h\circ f$ is a smooth linear function on $V$. Since by assumption
$v\in V_1$, we must have $h(w)=(h\circ f)(v)=0$. Since $h$ is any element of $C^{\infty}(W,\matR)$, we conclude that $w$ belongs to $W_1$, and since $w$ is
any element of $f(V_1)$, we obtain that $f(V_1)\leqslant W_1$, as wanted.
\end{proof}

\subsection{Pseudo-metrics compatible with a linear map}

As was established in Theorem \ref{characteristic-is-well-defined-thm}, a diffeological vector space $V$ whose underlying space has a fixed identification with
some $\matR^n$, contains a uniquely defined characteristic subspace. We now show that these subspaces allow to establish, given a smooth linear map $V\to W$,
where $V$ and $W$ are diffeological vector spaces, whether these spaces can be endowed with pseudo-metrics such that the map is well-behaved with respect to those
pseudo-metrics. The precise meaning of being well-behaved here is quite standard.

\begin{defn}
Let $V$ and $W$ be finite-dimensional diffeological vector spaces endowed with pseudo-metrics $g_V$ and $g_W$ respectively, and let $f:V\to W$ be a smooth
linear map. We say that $g_V$ and $g_W$ are \textbf{compatible} with respect to $f$ (or simply, with $f$) if for any $v_1,v_2\in V$ we have
$g_V(v_1,v_2)=g_W(f(v_1),f(v_2))$.
\end{defn}

For the duration of this section we shall assume that the underlying vector spaces of $V$ and $W$ are $\matR^n$ and $\matR^m$, respectively. Let $V_0\leqslant V$
and $W_0\leqslant W$ be their characteristic subspaces, and let $V_1\leqslant V$ and $W_1\leqslant W$ be their maximal isotropic subspaces, so that
$V=V_0\oplus V_1$ and $W=W_0\oplus W_1$. Then the existence on $V$ and $W$ of pseudo-metrics compatible with a smooth linear map $f:V\to W$ is subject to a
number of necessary conditions.

\begin{thm} \emph{(Proposition 2.4, Lemmas 2.2, 2.3, 2.5 and Theorem 2.7 of \cite{me2017b})}
Let $f:V\to W$ be a smooth linear map such that $V$ and $W$ admit pseudo-metrics compatible with $f$. Then:
\begin{enumerate}
\item $\dim(V^*)\leqslant\dim(W^*)$;
\item $\mbox{Ker}(f)\leqslant V_1$;
\item The subset diffeology on $f(V_0)$ relative to its inclusion in $W$ is standard;
\item The subspace $f(V_0)$ splits off smoothly in $W$.
\end{enumerate}
\end{thm}

Notice that item 1 in the above theorem is actually a consequence of item 3 of same and of Corollary \ref{max-dim-of-standard-subspace-cor}, and it is the only
condition that depends only on the spaces $V$ and $W$ themselves, with no reference to a specific $f$. Observe also that $\mbox{Im}(f)$ is, by item 2, isomorphic
to $V_0\oplus V_1/\mbox{Ker}(f)$; however, the subset diffeology on $\mbox{Im}(f)$ relative to its inclusion into $W$ may \emph{a priori} be larger when the
diffeology it inherits from $V$ via the pushforward construction, \emph{i.e.} the direct sum diffeology relative to the standard one on $V_0$ and the quotient
one on $V_1/\mbox{Ker}(f)$. However, $f(V_0)$ taken alone inherits certain properties of characteristic subspaces.

It needs to be said that the proof of Corollary 2.6 in \cite{me2017b} is incorrect, since it used items 3 and 4 above and was based on the mistaken belief that the characteristic
subspace is the only maximal standard subspace splitting off smoothly. Furthermore, the statement itself of the corollary is wrong, since there are smooth maps
(in fact, diffeomorphisms) that do not preserve characteristic subspaces.

\begin{example}\label{characteristic-not-invt-under-diffeo-ex}
Let $V$ and $g$ be as in the Example \ref{characteristic-not-unique-ex}, and let $f:V\to V$ be the linear map given by $e_1\mapsto e_1$ and $e_2\mapsto e_1+e_2$.
Observe that $f$ is clearly compatible with $g$. To show that it is smooth, consider again the generic form of a plot $Q$ of $V$, which is
$$u\mapsto\left(\alpha_1(u)+\sum_if_i(u)|F_i(u)|,\alpha_2(u)\right)$$
for $u\in U$, the domain of definition of $Q$, and some usual smooth functions $\alpha_1,\alpha_2,f_i,F_i\in C^{\infty}(U,\matR)$. We have
$$(f\circ Q)(u)=\left(\alpha_1(u)+\alpha_2(u)+\sum_if_i(u)|F_i(u)|,\alpha_2(u)\right),$$
which again has the form of a generic plot of $V$. Hence, $f$ is smooth (and we can similarly prove that its inverse, acting by $e_1\mapsto e_1$,
$e_2\mapsto e_2-e_1$, is also smooth, showing that $f$ is moreover a diffeomorphism).
\end{example}

In particular, in Theorem 2.9 of \cite{me2017b} the requirement that $f(V_0)\leqslant W_0$, must be replaced by asking $f(V_0)$ to be standard and split off
smoothly, so that we have the following statements.

\begin{thm}
Let $V$ and $W$ be such that $\dim(V^*)\leqslant\dim(W^*)$, and let $f:V\to W$ be a smooth linear map. Then $V$ and $W$ admit pseudo-metrics compatible with $f$
if and only if $\mbox{Ker}(f)\leqslant V_1$, and $f(V_0)$ is standard and splits off smoothly.
\end{thm}

The proof of the statement is, however, word-for-word that given for the Theorem 2.9 of \cite{me2017b} (indeed, the proof therein relied on the existence of a
smooth decomposition into the direct sum of a standard subspace and the isotropic subspace, with no assumption of its uniqueness).

Finally, let us consider the dual map $f^*:W^*\to V^*$ and the compatibility of the induced pseudo-metrics. Let $V$ and $W$ be endowed with pseudo-metrics $g_V$
and $g_W$ respectively; we denote by $g_V^*$ and $g_W^*$ the induced pseudo-metrics on $V^*$ and $W^*$ (\emph{i.e.}, $g_V^*(v_1^*,v_2^*)=g_V(v_1,v_2)$, where
$v_i\in V$ is any such element that $v_i^*(\cdot)=g(v_i,\cdot)$, and likewise for $g_W^*$). These induced pseudo-metrics are said to be \textbf{compatible} with
$f^*$ if
$$g_V^*(f^*(w_1^*),f^*(w_2^*))=g_W^*(w_1^*,w_2^*)\,\,\,\mbox{ for all }w_1^*,w_2^*\in W.$$
Recall also that $V^*$ and $W^*$ are standard, so $g_V^*$ and $g_W^*$ are usual scalar products.

\begin{thm} \emph{(Theorem 2.11 of \cite{me2017b})}
Let $V$ and $W$ be such that $\dim(V^*)\leqslant\dim(W^*)$, and let $f:V\to W$ be a smooth linear map such that $V$ and $W$ can be endowed with pseudo-metrics
$g_V$ and $g_W$ compatible with $f$. Then the corresponding induced pseudo-metrics $g_W^*$ and $g_V^*$ are compatible with $f^*$ if and only if $f^*$ is a
diffeomorphism.
\end{thm}

\section{Diffeological algebras and modules}

The notions of a diffeological algebra, a diffeological module, and a smooth action of the former on the latter are obtained from the usual ones in the most
natural way, by imposing the requirement for all the maps (such as the operation maps and so on) involved to be smooth.

\subsection{Diffeological algebras}

As is the most natural, a \textbf{diffeological algebra} is an algebra $A$ endowed with a diffeology such that $A$ is also a diffeological vector space, and
the multiplication operation is a smooth map. It is trivial to show (Lemma 2.7 of \cite{me2019}) that any subalgebra of $A$ is again a diffeological algebra
for the subset diffeology. If $I$ is an ideal of $A$, the quotient $A/I$ is a diffeological algebra for the quotient diffeology (Lemma 2.8 of \cite{me2019}).

\begin{example} \emph{(Observation 3.2 of \cite{me2019})}
Let $A$ be the usual algebra of $2\times 2$ matrices with real coefficients. The smallest algebra diffeology on $A$ that contains the plot
$\matR\ni x\mapsto\left(\begin{array}{cc} 0 & 0 \\ 0 & |x| \end{array}\right)$ is precisely the diffeology where every plot locally has form,
$$x\mapsto\left(\begin{array}{cc} f_{11}(x)+\sum_ig_{11}^i(x)|h_{11}^i(x)| & f_{12}(x)+\sum_jg_{12}^j(x)|h_{12}^j(x)| \\
f_{21}(x)+\sum_kg_{21}^k(x)|h_{21}^k(x)| & f_{22}(x)+\sum_lg_{22}^l(x)|h_{22}^2(x)| \end{array}\right),$$
where $f_{ab},g_{ab}^c,h_{ab}^c$ are usual smooth functions. Notice that such typical functions as $\det$ and $\mbox{\emph{tr}}$ are not smooth on $A$ (Lemma 3.3 of
\cite{me2019}).
\end{example}

The definition of a diffeological module is also a natural one.

\begin{defn}
A diffeological vector space $E$ is a \textbf{diffeological module over} a diffeological algerba $A$ if there is a fixed homomorphism $c:A\to L^{\infty}(E,E)$
that is smooth for the diffeology on $A$ and the functional diffeology on $L^{\infty}(E,E)$.
\end{defn}

Let us consider a simple example of a non-standard diffeological module (this is essentially the remark in Section 3.2.2.3 of \cite{me2019}).

\begin{example}
Let $V$ again be $\matR^2$ endowed with the vector space diffeology generated by the plot $p:\matR\to V$ acting by $x\mapsto|x|e_1$, and let $A$ be the
algebra composed of all matrices of form $\left(\begin{array}{cc} a & 0 \\ 0 & 0 \end{array}\right)$ for $a\in\matR$, endowed with the algebra diffeology
generated by the plot $\matR\ni x\mapsto\left(\begin{array}{cc} |x| & 0 \\ 0 & 0 \end{array}\right)$. Then the usual action of $A$ on $V$ by left multiplication
is smooth. On the other hand, the only diffeology for which the action on $V$ of the algebra composed of all upper-triangular matrices (\emph{i.e.}, such that
the $(2,1)$th coefficient is zero) is smooth, is the standard diffeology (this is fully analogous to the Proposition 3.4 of \cite{me2019}).
\end{example}

The above example can also be used to illustrate the known fact (see also \cite{wu}) that, in contrast to the standard case, $c$ does not have to appear as an
element of $L(A,L(E,E))\cong A^*\otimes L(E,E)\cong A^*\otimes E^*\otimes E$, in the sense that it is by definition an element of the first of these three
spaces, but neither of the isomorphisms have to hold. Indeed, in the above example the diffeological vector space underlying the algebra $A$ is just $\matR$
endowed with the vector space diffeology generated by the absolute value function. The diffeological dual of this space, and therefore of $A$, is of course
trivial, and therefore so are the second and the third spaces above. But $c$ is obviously a non-zero action (to each matrix, it assigns the linear map that
consists in multiplication by the $(1,1)$th coefficient of the matrix).

\subsection{The tensor algebra of a diffeological vector space}

The tensor algebra of a diffeological vector space $V$ is the only instance of an infinite-dimensional diffeological vector space that appears in this paper.
It is a diffeological algebra for the tensor product diffeology (Lemma 4.1 of \cite{me2019}) and the (direct) sum diffeology (in particular, its decomposition as
$\bigoplus_{k=0}^{\infty}V^{\otimes k}$ is smooth by construction), so its subalgebras, ideals, and quotients all have natural
diffeological structures.

Such classic operators as the symmetrization operator
$$\mbox{Sym}:\underbrace{V\otimes\ldots\otimes V}_n\ni v_1\otimes\ldots\otimes v_n\mapsto\frac{1}{n!}\sum_{\sigma\in S_n}v_{\sigma(1)}\otimes\ldots\otimes
v_{\sigma(n)}\in\underbrace{V\otimes\ldots\otimes V}_n$$
and the antisymmetrization operator
$$\mbox{Alt}:\underbrace{V\otimes\ldots\otimes V}_n\ni v_1\otimes\ldots\otimes v_n\mapsto\frac{1}{n!}\sum_{\sigma\in S_n}\mbox{sgn}(\sigma)v_{\sigma(1)}\otimes
\ldots\otimes v_{\sigma(n)}\in\underbrace{V\otimes\ldots\otimes V}_n,$$
where $S_n$ stands for the group of permutations on $n$ elements, and the two operators are extended by linearity, are smooth as maps
$\underbrace{V\otimes\ldots\otimes V}_n\to\underbrace{V\otimes\ldots\otimes V}_n$. Furthermore, the following is true.

\begin{prop} \emph{(Lammas 2.10 and 2.11 of \cite{me2019})}
For both $\mathcal{S}_n(V)$, the space of all symmetric $n$-tensors on $V$, and $\mathcal{A}_n(V)$, the space of all antisymmetric $n$-tensors on $V$, the
subset diffeology relative to the inclusion into $T(V)$ and the pushforward of the diffeology of $T(V)$ by, respectively, $\mbox{Sym}$ and $\mbox{Alt}$,
coincide.
\end{prop}

\subsection{The exterior algebra}

For any finite-dimensional diffeological vector space $V$, the algebra
$$\bigwedge^nV^*=\mbox{Alt}(\underbrace{V^*\otimes\ldots\otimes V^*}_n)$$
of all antisymmetric covariant $n$-tensors is a standard space (since $V^*$ is so). In particular, the usual exterior product
$$\wedge:\bigwedge^kV^*\times\bigwedge^lV^*\to\bigwedge^{k+l}V^*$$
is trivially smooth. If $V$ has a fixed basis, its exterior algebra coincides with that of its characteristic subspace. In either case, it is diffeomorphic
to that of its \emph{characteristic quotient} (the quotient of $V$ by its maximal isotropic subspace).

There is of course also the contravariant version of the exterior algebra, which (unlike the standard case, and precisely because in general $V^*$ is not
isomorphic to $V$ itself) needs to be treated separately. We denote this contravariant exterior algebra by
$$\bigwedge_*^nV=\mbox{Alt}(\underbrace{V\otimes\ldots\otimes V}_n).$$
In this case the smoothness of the exterior product is not automatically guaranteed, but it follows from the smoothness of the antisymmetrization operator.

\subsection{Diffeological Clifford algebras and actions}

Let $V$ be a finite-dimensional diffeological vector space, and let $q$ be a smooth symmetric bilinear form on $V$. As in the standard case, the Clifford
algebra associated to $V$ and $q$ is the quotient $T(V)/I(V)$ of the tensor algebra $T(V)$ by its ideal $I(V)$ generated by all elements of form
$v\otimes w+w\otimes v+4q(v,w)$, where $v,w\in V$, the ideal $I(V)$ has the subset diffeology, and the quotient is endowed with the quotient diffeology. (Notice
that this yields a contravariant version of a Clifford algebra. One may also consider its covariant version, which in general is different --- with the
underlying vector spaces being distinct --- from the contravariant version).

\begin{example}\label{contravariant-clifford-algebra-ex}
Let $V$ be as in the Example \ref{characteristic-not-unique-ex}, that is, $\matR^2$ with the diffeology given by the plot $p:\matR\ni x\mapsto |x|e_1$, and
let $q$ be the pseudo-metric $g$ described in the same example, \emph{i.e.}, given by the matrix $\left(\begin{array}{cc} 0 & 0 \\ 0 & 1 \end{array}\right)$.
The corresponding Clifford algebra $\cl(V,g)$ is therefore determined by the relations $e_1\otimes e_1=0$, $e_1\otimes e_2=-e_2\otimes e_1$, and
$e_2\otimes e_2=-2$. Its underlying vector space, of dimension $4$, is generated by $1$, $e_1$, $e_2$, and $e_1\cdot e_2$ (where $\cdot$ denotes the product
in the Clifford algebra), and so is diffeomorphic to $\matR^4$ with the vector space diffeology generated by the plot $\matR\ni x\mapsto(0,|x|,0,0)$ and
$\matR\ni x\mapsto(0,0,0,|x|)$. Observe that its maximal isotropic subspace is the ideal of $\cl(V,q)$ generated by the maximal isotropic subspace,
$\mbox{Span}(e_1)$, of the initial vector space $V$.
\end{example}

In general, it is a matter of standard reasoning to observe that if $V=V_0\oplus V_1$ is a decomposition of a (finite-dimensional) diffeological vector space $V$
endowed with a pseudo-metric $g$, into the smooth direct sum of a characteristic subspace and the maximal isotropic subspace, $g_0$ is the restriction of $g$
onto $V_0$, $g_/$ is the pushdown of $g$ onto the characteristic quotient $V/V_1$, and $g^*$ is the induced pseudo-metric on $V^*$, then of course
$\cl(V_0,g_0)\cong\cl(V/V_1,g_/)\cong\cl(V^*,g^*)$, and for any basis $v_1',\ldots,v_{n-k}'$ of $V_1$ we have
$$\cl(V,g)=\cl(V_0,g_0)\oplus\left(\bigoplus_{i=1}^{n-k}v_i'\cl(V_0,g_0)\right),$$
where the internal product on the second factor is always zero, while products by elements of $\cl(V_0,g_0)$ keep each summand of form $v_i'\cl(V_0,g_0)$
invariant. Since also any element of $\cl(V_0,g_0)$ anticommutes with any $v_i'$, these products are essentially determined by the multiplication in
$\cl(V_0,g_0)$, and so the entire $\cl(V,g)$ is essentially determined by $\cl(V_0,g_0)$ (or, alternatively, by $\cl(V/V_1,g_/)$ or $\cl(V^*,g^*)$) and the
dimension of its maximal isotropic subspace (which is $\dim(V)-\dim(V^*)$). Moreover, the following is true.

\begin{lemma}
For any choice of a characteristic subspace, $\cl(V_0,g_0)$ splits off smoothly in $\cl(V,g)$.
\end{lemma}

\begin{proof}
Recall first the diffeology of $T(V)$ is by definition the direct sum diffeology corresponding to its presentation as $\bigoplus_{k=0}^{\infty}V^{\otimes k}$.
This in particular means that any nonconstant plot of it is a sum of a finite number of plots of finite tensor degrees of $V$. Since also the smoothness of a
direct sum is respected under the passage to a quotient diffeology, it suffices to show that for any smooth decomposition of $V$ as $V=V_1\oplus V_2$, the direct
sum decomposition of $V\otimes V$ as $(V_1\otimes V)\oplus(V_2\otimes V)$ is smooth, which immediately follows from Lemma \ref{tensor-prod-is-distributive-lem}.
\end{proof}

In particular, the covariant Clifford algebra $\cl(V^*,g^*)$ is diffeomorphic to a subalgebra of the contravariant Clifford algebra $\cl(V,g)$ that, as a vector
subspace, splits off smoothly. A more standard direct sum representation of a Clifford algebra such as its $\mathbb{Z}_2$-grading
$$\cl(V,g)=\cl(V,g)^0\oplus\cl(V,g)^1$$
is also a smooth direct sum decomposition (Proposition 4.8 of \cite{me2019}). Likewise, the standard filtration of $\cl(V,g)$ by $\cl^k(V,g)/\cl^{k-1}(V,g)$
has the expected smoothness properties in that the subset diffeology on $\cl^k(V,g)$ (this being, as usual, the image of the restriction $\pi^k$ to
$T^k(V)=\bigoplus_{r=0}^kV^{\otimes r}$ of the natural projection $\pi:T(V)\to\cl(V,g)$) relative to its inclusion $\cl^k(V,g)\subset\cl(V,g)$ coincides
with the pushforward of the subset diffeology on $T^k(V)\subset T(V)$ by the projection map $\pi^k$ (Lemma 4.9 of \cite{me2019}), and, with respect to the
quotient diffeology on $\cl^k(V,g)/\cl^{k-1}(V,g)$, the usual isomorphism $\cl^k(V,g)/\cl^{k-1}(V,g)\to\bigwedge_*^kV$ is a smooth map with a smooth
inverse.

Since the case of the covariant Clifford algebra $\cl(V^*,g^*)$ and the covariant exterior algebra $\bigwedge V^*$ coincides with the standard one,
$\bigwedge V^*$ is a Clifford moodule via the standard action of $\cl(V^*,g^*)$ (that is, the action $c:\cl(V^*,g^*)\to\mbox{End}(\bigwedge V^*)$ determined by
$$c(v^*)=\varepsilon(v^*)-i(v^*)\,\,\mbox{ for all }\,\,v^*\in V^*$$
with $\varepsilon(v^*)$ acting on $\bigwedge V^*$ by the left exterior product ($\varepsilon(v^*)(\alpha)=v^*\wedge\alpha$ for all $\alpha\in\bigwedge V^*$) and
$i(v^*)$ being the adjoint of $\varepsilon(v^*)$ via $g^*$, \emph{i.e.}, acting by
$$i(v^*)(w_1\wedge\ldots\wedge w_l)=\sum_{i=1}^l(-1)^{i+1}w_1\wedge\ldots\wedge g^*(v^*,w_i)\wedge\ldots\wedge w_l.$$ This can be carried over word-for-word to
the contravariant case (which would in fact be the case of a Clifford algebra associated to a bilinear form with some degree of degeneracy), in which case we
observe that the action is smooth (Section 4.2.2 of \cite{me2019}).

\begin{example}
Let $V$ and $g$ be as in the Example \ref{contravariant-clifford-algebra-ex}. Since we would have $i(e_1)=0$ (this would obviously be the case for any $V$ and
for any element of its maximal isotropic subspace),
$$c(e_1)=\varepsilon(e_1)=e_1\wedge,$$
\emph{i.e.} it is the map acting on the basis $1,e_1,e_2,e_1\wedge e_2$ by $1\mapsto e_1$, $e_1\mapsto 0$, $e_2\mapsto e_1\wedge e_2$, and
$e_1\wedge e_2\mapsto 0$. For $e_2$, we would have that $i(e_2)$ is given by
$i(e_2)(1)=0$, $i(e_2)(e_1)=0$, $i(e_2)(e_2)=1$, and $i(e_2)(e_1\wedge e_2)=-e_1$, while $\varepsilon(e_2)=e_2\wedge$ acts by $\varepsilon(e_2)(1)=e_2$,
$\varepsilon(e_2)(e_1)=-e_1\wedge e_2$, and $\varepsilon(e_2)(e_2)=\varepsilon(e_2)(e_1\wedge e_2)=0$. Thus, $c(e_2)$ acts by $c(e_2)(1)=e_2$,
$c(e_2)(e_1)=-e_1\wedge e_2$, $c(e_2)(e_2)=-1$, and $c(e_2)(e_1\wedge e_2)=e_1$. Observe also that the classic isomorphism $\sigma:\cl(V,g)\to\bigwedge_*V$
given by $\sigma(x)=c(x)(1)$ for all $x\in\cl(V,g)$, is indeed an isomorphism, since we have $c(1)(1)=1$, $c(e_1)(1)=e_1$, $c(e_2)(1)=e_2$, and (accordingly)
$c(e_1\cdot e_2)(1)=e_1\wedge e_2$. It is also smooth (Proposition 4.3 of \cite{me2019}; it was actually stated, in a somewhat tautological way, for scalar
products, but the proof as given therein works for any smooth bilinear form).
\end{example}

\vspace{1cm}

\noindent University of Pisa \\
Department of Mathematics \\
Via F. Buonarroti 1C\\
56127 PISA -- Italy\\
\ \\
ekaterina.pervova@unipi.it\\

\end{document}